\newtheorem{theorem}{Theorem}
\newtheorem{example}[theorem]{Example}
\newtheorem{proposition}[theorem]{Proposition}
\newtheorem{remark}[theorem]{Remark}
\begin{document}

\title[On invariants and equivalence %of differential operators under Lie pseudogroups actions
]{On invariants and equivalence of differential operators under Lie
pseudogroups actions}
\author{Valentin Lychagin \&  Valeriy Yumaguzhin}
\maketitle

\begin{abstract}
In this paper, we study invariants of linear differential operators with
respect to algebraic Lie pseudogroups. Then we use these invariants and the
principle of n-invariants to get normal forms (or models) of the
differential operators and solve the equivalence problem for actions of
algebraic Lie pseudogroups. As a running example of application of the
methods, we use the pseudogroup of local symplectomorphisms.
\end{abstract}

%%%%%%%%%%%%%%%%%%%%%%%%%%%%%%%%%%%%%%%%%
\section{Introduction}

The counterpoint and polyphony play the crucial role, not only in music and
art, but also in science and mathematics.

So, for example, if we take the theory of invariants, then at the very
beginning to study invariants of binary forms Sylvester J.J. proposed the
construction of invariants by transvectants (\cite{Syl},\cite{Olv}) and
Gordan P.(\cite{Gor}) not only proved that the algebra of polynomial
invariants of binary forms is finitely generated, but also shown that
invariants might be found by transvectants, that are concrete differential
operators.

It explains Gordon's denial of Hilbert's proof of the finiteness of the
invariant algebras.

In the previous publications (\cite{BL2},\cite{BL},\cite{LR}), we discussed
applications of differential invariants to finding of the algebraic ones. It
is related to differences between Hilbert-Rosenlicht (\cite{Ros}) and
Lie-Tresse theorems (\cite{KL}), describing the structures of algebras of
algebraic and differential invariants. This interplay between the algebraic
and differential methods seems extremely fruitful, especially in application
to differential operators.

It is also worth to note that the number of basic invariants, needed to
describe regular orbits, essentially different for algebraic and
differential invariants. Thus, the Rosenlicht theorem requires the number of
algebraic invariants equals to the codimension of the regular orbit, that
exponentially grows with the degree of algebraic forms under consideration.
On the other hand, the Lie-Tresse theorem, or the n-invariant principle (%
\cite{ALV}), requires essentially fewer invariants. Thus, in description of
linear scalar differential operators, we need only n, equals to the
dimension of the base, invariants.

In this paper, we study orbits of scalar linear (and some class of
non-linear) differential operators under action of a Lie pseudogroups, the
case of the complete pseudogroup of all local diffeomorphisms was studied in
(\cite{LY}).

We reverse, in this case, the interplay between algebraic and differential
invariants and show that to find the necessary number of differential
invariants we may use algebraic invariants of symbols of operators. Because
the number of these invariants restricted to the dimension of the base, we
use here transvectants.

The paper is organized as follows. At first, we remind the basics on Lie
pseudogroups of transformations and their invariants. Then we discuss the
principle of n-invariants and models of linear differential operators built
on invariants, being in general position. This allows us to get a solution
of the equivalence problem for differential operators with respect to a
given Lie pseudogroup of transformations. We illustrate this approach on the
Lie pseudogroup of symplectomorphisms. To this end, we introduce the
symplectic transfectants. They allow us to construct symplectic invariants
of differential operators, by using the symbols of operators only.

However, there is a class of operators where symplectic invariants of
symbols are never in general position. These operators are characterized by
a requirement that symplectic orbits of their symbols are regular and
constant. We call such operators as constant type operators (\cite{LY1}).

With such types of operators, having order $k > 2$ %k\TEXTsymbol{>}2
, we associate a
unique affine connection that is symplectic, i.e. preserves the symplectic
structure form, as well as the symbol. These connections we call Wagner
connections (\cite{Wag}), who discovered such connections associated with
cubic symmetric tensors on 2-dimensional manifolds. These connections have
zero curvature, but non-trivial torsion. We use these connections to split
(or quantize) the differential operators into the sum of symmetric tensors,
and get additional invariants by pure algebraic methods. For differential
operators of the second order, we use the Levi-Civita connection to split
the differential operator and find additional invariants.

%%%%%%%%%%%%%%%%%%%%%%%%%%%%%%%%%%%%%%%%%
\section{Lie pseudogroups and Lie equations}

By a pseudogroup $\EuScript{P}$, %$\mathcal{P}$, %$\EuScript{P}$, $\mathcal{P}$, $\Script{P,,}$, 
acting on a connected manifold $M$, we mean (\cite{Sing}) a collection of local diffeomorphisms, i.e. diffeomorphisms of open sets in %\thinspace 
$M$, closed under:

\begin{enumerate}
\item restrictions on open domains: $\phi\in\EuScript{P}$, $U\subset M$- open, then 
$\phi \vert _{U}\in\EuScript{P}$; 
%$\left. \phi \right\vert _{U}\in\EuScript{P}$ and 
if $\phi\vert _{U}\in\EuScript{P}$ for any
open domain $U\subset {\rm dom} (\phi)$, % $U\subset {\rm dom}$ 
 then $\phi \in \EuScript{P}$, %\mathfrak{\Gamma }$ .
%\end{enumerate}\end{document} 
\item composition: $\phi ,\psi \in \EuScript{P}$, then $\phi \circ
\psi \in \EuScript{P}$, if ${\rm dom}\left( \phi \right) \subset 
{\rm im}\left( \psi \right)$, %${\rm dom}\left( \phi \right) \subset 
%\limfunc{im}\left( \psi \right) .$
\item inverse: $\phi\in\EuScript{P}\Rightarrow \phi^{-1}\in \EuScript{P}$, and ${\rm id}_{M}\in \EuScript{P}$.
%$\limfunc{id}_{M}\in \EuScript{P}.$
\end{enumerate}

The action is said to be \ \textit{transitive,} if for any $a,b\in M$ ,\
there exists $\phi \in \EuScript{P},$ such that $\phi \left( a\right)
=b,$ and the action is said to be \textit{primitive,} if there are no $%
\EuScript{P}$ - invariant foliations on $M.$

Let $\delta _{k}\colon D_{k}\left( M\right) \rightarrow M$ be the bundles of $k$%
-jets of local diffeomorphisms. Fibres $\delta _{k}^{-1}\left( a\right) ,$
where $a\in M,$ consist of $k$-jets $[\phi ]_{a}^{k}$ of local
diffeomorphisms $\phi $ at the point.

We have the tower of bundles 
\begin{equation*}
\rightarrow\! D_{k}(M) \overset{\delta _{k,k-1}}{\rightarrow}%
D_{k-1}(M)\rightarrow \cdots\rightarrow\! D_{2}(M) 
\overset{\delta _{2,1}}{\rightarrow}D_{1}(M)\overset{\delta_{1,0}}{\rightarrow }M\times M\overset{\delta}{\rightarrow}M,
\end{equation*}%
where mappings $D_{k}\left( M\right) \overset{\delta _{k,k-1}}{\rightarrow }%
D_{k-1}\left( M\right) $ are the reductions of $k$-jets $[\phi ]_{a}^{k}$ to
the $\left( k-1\right) $-jets $[\phi ]_{a}^{k-1}.$

Given a pseudogroup $\EuScript{P},$ we define $G_{k}\subset
D_{k}\left( M\right) $ to be the following  subset, %consisting of $k$-jets of elements of $\mathcal{\Gamma }$ :%
\begin{equation*}
G_{k}=\left\{ [\phi ]_{a}^{k},\phi \in \EuScript{P},a\in {\rm dom}%
\left( \phi \right) \right\} .
\end{equation*}%
We say that the pseudogroup $\EuScript{P}$ is \textit{regular }if all $%
G_{k}$ are smooth submanifolds in $D_{k}\left( M\right) $ and $\delta
_{k}:G_{k}\rightarrow M$ are smooth subbundles of bundles $\delta
_{k}:D_{k}\left( M\right) \rightarrow M.$

Remark, that the composition in the pseudogroup induces partially defined
composition and defines a Lie algebroid structure in $G_{k}.$ Taking fibres $%
G_{k}$ at points $a\times a$ of the diagonal $\Delta \subset M\times M,$ we
get subgroups $G_{k}\left( a\right) $ of the differential groups $%
D_{k}\left( a\right) ,$ formed by $k$-jets $[\phi ]_{a}^{k}$ of
diffeomorphisms $\phi \in \EuScript{P},$ such that $\phi \left(
a\right) =a.$

We say that a regular pseudogroup $\EuScript{P}$ %$\mathfrak{G}$ 
is a \textit{Lie pseudogroup%
} if the tower 
\begin{equation}
\rightarrow G_{k}\overset{\delta _{k,k-1}}{\rightarrow }G_{k-1}\overset{%
\delta _{k-1,k-2}}{\rightarrow }G_{k-2}\rightarrow \cdots \rightarrow G_{1}%
\overset{\delta _{1,0}}{\rightarrow }M\times M\overset{\delta }{\rightarrow }%
M,  \label{Tower}
\end{equation}%
is the tower of smooth bundles, and it defines formally integrable equation $%
G$ (called \textit{Lie equation, }see\textit{\ \cite{Kum} }for more details)
in the sense that \ the first prolongations $G_{k}^{\left( 1\right) }\subset
G_{k+1}$, for all $k\geq 1.$

It also follows from the Cartan-Kuranishi prolongation theorem that in this
case there is a number $k_{0},$ called the \textit{order of the pseudogroup} 
$\EuScript{P},$ such that $i$-th prolongations $G_{k_{0}}^{\left(
i\right) }\ \ $of $G_{k_{0}}$ coincide with $G_{k_{0}+i},$ for all $i\geq 0.$

In other words, elements of $\EuScript{P}$ are solutions of
differential equation $G_{k_{0}},$ having order $k_{0}.$

Moreover, the Cartan--K\"{a}hler theorem states that in the analytical case,
i.e. in the case when all manifolds and mappings under consideration are
analytic, the elements of pseudogroup $\EuScript{P}$ are precisely all
local solutions of equation $G_{k_{0}}.$

This restriction on $\EuScript{P}$ we'll use only once in the paper
(and we'll indicate it) so in the rest of the paper the pseudogroup will be
a pseudogroup of smooth local diffeomorphisms.

It is also worth to note that fibres of the bundles $D_{k}\left( M\right) $
have the intrinsic structure of algebraic manifolds and the prolongations of
local diffeomorphisms of the manifold $M$ to these bundles are algebraic too.

In particular, the differential groups $D_{k}\left( a\right) $ are algebraic
as well as their actions$.$

We say, (see \cite{KL}, for more details), that the Lie pseudogroup $%
\EuScript{P}$ is \textit{algebraic} if $G_{k}\subset D_{k}\left(
M\right) $ inherit this algebraicity, i.e. fibres $G_{k}$ are algebraic
submanifolds in the fibres $D_{k}\left( M\right) $ and the composition law
is also algebraic.

In particular, groups $G_{k}\left( a\right) $ are algebraic as well as their
actions$.$

Remark also, that this notion of algebraicity could also be applied to jet
bundles $\pi _{k}:J^{k}\left( \pi \right) \rightarrow M$ of sections of any
smooth bundle $\pi :E\left( \pi \right) \rightarrow M.$ Namely,
prolongations of point transformations (i.e. local diffeomorphisms of $%
E\left( \pi \right) $ ) into fibres of projections $\pi _{k,0}:J^{k}\left(
\pi \right) \rightarrow E\left( \pi \right) $ are rational in the standard
jet coordinates, and therefore the algebraic structure of these fibres
induced by the choice of the standard jet coordinates.

We also will call formally integrable differential equations $\mathcal{%
E\subset }J^{k}\left( \pi \right) $ \textit{algebraic }if the bundle $\pi
:E\left( \pi \right) \rightarrow M$ is natural and algebraic, i.e. fibres of 
$\pi $ have the structure of irreducible algebraic manifolds that is
invariant under the action of the pseudogroup of local diffeomorphisms of $M$
and fibres of the projection $\pi _{k,0}:\mathcal{E\rightarrow }E\left( \pi
\right) $ are irreducible algebraic manifolds too.

In such a setting, we define (\cite{KL})\textit{\ rational differential }$%
\EuScript{P}-$\textit{invariants of order} $\left( k+l\right) $ as $%
\EuScript{P}-$invariant rational functions on the $l$-th prolongation $%
\mathcal{E}^{\left( l\right) }\mathcal{\subset }J^{k+l}\left( \pi \right) .$

Because we assumed that the $\EuScript{P}$ - action is transitive on
manifold $M$, these invariants are completely defined by their values on
the fibre $\mathcal{E}_{b}^{\left( l\right) }\mathcal{\subset }%
J_{b}^{k+l}\left( \pi \right) $ over\ a fixed base point $b\in M.$

Thus, rational differential $\EuScript{P}-$invariants of order $\left(
k+l\right) $ form a field $\mathcal{F}_{l}$ that, due to the Rosenlicht
theorem (\cite{Ros}), separates regular $G_{k+l}\left( b\right)$ - orbits in 
$\mathcal{E}_{b}^{\left( l\right) }$ and the transcendence degree of $%
\mathcal{F}_{l}$ equals to the codimension of the regular $G_{k+l}\left(
b\right) -$ orbits.

Moreover, the Lie -Tresse theorem (\cite{KL}) states that the field of all
rational differential $\EuScript{P}-$invariants $\mathcal{F}_{\ast }$
is generated by a finite number of $\EuScript{P}-$invariants and $%
\EuScript{P}-$invariant derivations (usually, they are Tresse
derivations, associated with a set of rational differential %$\mathfrak{\Gamma }$
$\EuScript{P}$-invariants, being in general position).

%%%%%%%%%%%%%%%%%%%%%%%%%%%%%%%%%%%%%%%%%%
\section{Linear differentiial operators and their $\EuScript{P}$%
-invariants}

In the paper we use the following notations, the $C^{\infty }\left( M\right) 
$-modules of smooth sections of vector bundles $\pi :E\left( \pi \right)
\rightarrow M$ will be denoted by $C^{\infty }\left( \pi \right) .$

By $\pi _{k}:J^{k}\left( \pi \right) \rightarrow M$ \ we denote the bundles
of $k$-jets of sections of the bundle $\pi $, and by $\pi _{k,l}:J^{k}\left(
\pi \right) \rightarrow J^{l}\left( \pi \right) ,k>l,$ the natural
projections.

Let $\psi _{k}\colon\mathit{Diff}_{k}(M) \rightarrow M$ be the bundle of
linear scalar differential operators of order $\leq k.$

Then $C^{\infty }\left( \psi _{k}\right) ={\rm Diff}_{k}\left( M\right) $
is the module of linear scalar differential operators on $M,$ having order $%
\leq k.$

The following exact sequences of the modules will play the crucial role 
\begin{equation}
0\rightarrow {\rm Diff}%\nolimits
_{k-1}\left( M\right) \rightarrow 
{\rm Diff}%\nolimits
_{k}\left( M\right) \overset{{\rm smbl}}{%{\limfunc{smbl}}{%
\rightarrow }\Sigma _{k}\left( M\right) \rightarrow 0.  \label{exact seq}
\end{equation}%
Here $\Sigma _{k}\left( M\right) $ is the module of symmetric vector fields
of degree $k$, $\Sigma _{k}(M) = C^{\infty }(\tau_{k})$, where $\tau _{k}:S^{k}\left( TM\right) \rightarrow M$ is the $%
k$-th symmetric power of the tangent bundle and the ${\rm smbl}$-map %\limfunc{smbl}$-map
sends operators $A\in {\rm Diff}_{k}\left( M\right) $ to their symbols $%
\sigma _{A}\in \Sigma _{k}\left( M\right) $.

Remark, that any Lie pseudogroup $\EuScript{P}$ of local
diffeomorphisms of $M$ acts also on all these bundles.

Rational functions on manifolds $J^{l}\left( \psi _{k}\right) $ or $%
J^{l}\left( \tau _{k}\right) ,$ that are invariant with respect to the
prolonged $\EuScript{P}$-actions, will be called \textit{rational }$%
\EuScript{P}$\textit{-invariants }of differential operators or $k$%
-symmetric vector fields.

Because of transitivity $\EuScript{P}$-action on $M$ these invariants
are defined by their values on the fibres $J_{b}^{l}\left( \psi _{k}\right) $
or $J_{b}^{l}\left( \tau _{k}\right) $ at a fixed basic point $b\in M.$

We denote by $\mathcal{F}_{k,l},$ or simpler $\mathcal{F}_{l}$ ,if the order
of operators under consideration is fixed, the field of rational $\EuScript{%
P}$ \textit{-invariants of order }$\leq l,$ and by $\mathcal{F}%
_{k,l}^{\sigma }\ $ we denote the field of rational $\EuScript{P}$%
\textit{-invariants of order }$\leq l$ of the symmetric vector fields.

Exact sequence (\ref{exact seq}) shows that $\mathcal{F}_{k,l}^{\sigma }$
are subfields of $\mathcal{F}_{k,l}$ and this gives us an option to get $%
\EuScript{P}$-invariants of differential operators in pure algebraic
way.

In addition, we have the natural universal differential operator (\cite{LY}) 
\begin{eqnarray*}
\square &:&C^{\infty }\left( J^{\infty }\left( \psi _{k}\right) \right)
\rightarrow C^{\infty }\left( J^{\infty }\left( \psi _{k}\right) \right) , \\
\square &:&C^{\infty }\left( J^{l}\left( \psi _{k}\right) \right)
\rightarrow C^{\infty }\left( J^{l+k}\left( \psi _{k}\right) \right) ,l\geq
0,
\end{eqnarray*}%
that allow us the extend the set of available $\EuScript{P}$%
-invariants.

This operator is natural (and thus it is $\EuScript{P}$-invariant),
i.e. commutes with prolongations of diffeomorphisms of $M,$ and, therefore,
defines maps%
\begin{equation*}
\square :\mathcal{F}_{k,l}\rightarrow \mathcal{F}_{k,k+l},
\end{equation*}%
for $l\geq 0.$

Remark (see \cite{LY}), that in standard jet coordinates $\left(
x_{1},..,x_{n},u_{\alpha }\right)$, where $\left( x_{1},..,x_{n}\right) $
are local coordinates on $M,$ $\alpha $ are multi indices of lengths $%
\left\vert \alpha \right\vert \leq k,$ and $u_{\alpha }\left( A\right)
=A_{\alpha },$ if $A=%\dsum\limits_{\alpha }
\sum\limits_{\alpha }A_{\alpha }\partial ^{\alpha }$
in the coordinates $\left( x_{1},..,x_{n}\right) ,$ we have 
\begin{equation*}
\square =%\dsum
\sum\limits_{\left\vert \alpha \right\vert \leq k}
u_{\alpha }\frac{d^{\left\vert \alpha \right\vert }}{dx^{\alpha }},
\end{equation*}%
where $\displaystyle{\frac{d}{dx_{i}}}$ are the total derivatives.

The \textit{n-invariants principle} that we have used in (\cite{LY}) for the
pseudogroup of all local diffeomorphisms of $M,$ could be applied
practically word by word for actions of arbitrary Lie pseudogroups.

Namely, let $I_{1},\ldots,I_{n}\in C^{\infty }\left( J^{l}\left( \psi
_{k}\right) \right)$, $n=\dim M$, be $\EuScript{P}$-invariants that
are in general position in an open set $\mathcal{O\subset }$ $J^{l}\left(
\psi _{k}\right) $, i.e.%
\begin{equation*}
\widehat{d}I_{1}\wedge \cdots \wedge \widehat{d}I_{n}\neq 0
\end{equation*}%
over $\mathcal{O}$, where $\widehat{d}$ is the total differential.

Then, 
\begin{equation*}
I_{\alpha }=\frac{1}{\alpha !}\square \left( I^{\alpha }\right) ,
\end{equation*}%
where $I^{\alpha }=I_{1}^{\alpha _{1}}\cdots I_{n}^{\alpha _{n}},$ $%
\left\vert \alpha \right\vert \leq k,$ are also $\EuScript{P}$%
-invariants.

Moreover, their values at the operator $A$ (i.e. their restrictions on the $%
l $-jets of section $S_{A}$ that corresponts to operator $A$) coincide with
coefficients $A_{\alpha }$ of the operator $A$ in local coordinates $%
x_{1}=I_{1}\left( A\right) ,...,x_{n}=I_{n}\left( A\right) .$

Therefore, we get the following \ description of the $\EuScript{P}$%
-invariants.

\begin{theorem}
Let $I_{1},\ldots,I_{n}\in C^{\infty }\left( J^{l}\left( \psi _{k}\right)
\right)$, $n=\dim M$, be rational $\EuScript{P}$-invariants of scalar
linear differential operators on $M$, that are in general position in an
open set $\mathcal{O\subset }$ $J^{l}\left( \psi _{k}\right)$. %\newline

Then all rational $\EuScript{P}$-invariants of scalar linear
differential operators over the open set $\mathcal{O}$ are just rational
functions of invariants $(I_1,\ldots, I_n$, $I_{\alpha}, |\alpha|\leq k )$ and their Tresse derivatives $\displaystyle{\frac{d^{\beta}I_{\alpha }}{dI^{\beta }}}$.
\end{theorem}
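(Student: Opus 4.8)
The plan is to reduce the statement to the standard structure theory of differential invariants (Lie--Tresse) by exhibiting the invariants $I_1,\dots,I_n$ as a \emph{total-differential-independent} system and then showing that the functions $I_\alpha=\frac{1}{\alpha!}\square(I^\alpha)$ together with the $I_j$ generate the whole field $\mathcal{F}_{k,\ast}$, with the Tresse derivations $\frac{d}{dI^\beta}$ playing the role of the invariant derivations. First I would fix the base point $b\in M$ and work, by transitivity of the $\EuScript{P}$-action, with rational $G_{k+m}(b)$-invariant functions on the fibres $J^m_b(\psi_k)$. The starting observation is the one already quoted in the excerpt: the values $I_\alpha(A)$ coincide with the coefficients $A_\alpha$ of the operator $A$ written in the coordinate system $x_j=I_j(A)$ on $M$. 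This is the crux, and I would spend the first part of the proof making it precise: since $\widehat{d}I_1\wedge\cdots\wedge\widehat{d}I_n\neq0$ on $\mathcal{O}$, the functions $I_1,\dots,I_n$ can be used as new (jet-dependent) coordinates on the base near the relevant points, and $\square$ is exactly the substitution operator $\sum_{|\alpha|\le k}u_\alpha\, d^{|\alpha|}/dx^\alpha$; applying it to the monomial $I^\alpha$ and restricting to $S_A$ reproduces the Taylor coefficient $A_\alpha$ in the $I$-coordinates, up to the combinatorial factor $\alpha!$ that we have divided out.

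Next I would argue the generating property. Let $J\in\mathcal{F}_{k,l'}$ be an arbitrary rational $\EuScript{P}$-invariant of order $\le l'$. Evaluated at an operator $A$ (with $l'$-jet of the corresponding section $S_A$), the number $J(A)$ is, by $\EuScript{P}$-invariance, a function only of the $G_{k+l'}(b)$-orbit of the jet of $S_A$. In the $I$-coordinates the operator $A$ is encoded by the family of functions $x\mapsto A_\alpha(x)$ on $M$, i.e. by the scalar functions $I_\alpha$ as functions on the base; their partial derivatives in the $I$-coordinates are precisely the Tresse derivatives $\frac{d^\beta I_\alpha}{dI^\beta}$, because $\frac{d}{dI_j}$ are the total derivatives dual to $\widehat{d}I_1,\dots,\widehat{d}I_n$ (this requires the general-position hypothesis to invert the Jacobian $\left(\frac{d I_j}{dx_i}\right)$). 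Thus the jet of $S_A$ of any order is completely reconstructible — in a $\EuScript{P}$-equivariant way — from the values of $I_\alpha$ and their iterated Tresse derivatives at the point $b$. Consequently any $\EuScript{P}$-invariant function of that jet is a function of these data. Since $J$ is moreover rational, and since the reconstruction map is rational (prolongations of diffeomorphisms act rationally in standard jet coordinates, as recalled in the excerpt, and the change to $I$-coordinates is rational on $\mathcal{O}$), the resulting function of the $I_\alpha$ and their Tresse derivatives is rational as well. This gives the claimed description.

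The step I expect to be the main obstacle is the passage from ``$J$ is determined by these data'' to ``$J$ is a \emph{rational} function of finitely many of them'': one must ensure that the map from the orbit space to the values of $(I_j,I_\alpha,\frac{d^\beta I_\alpha}{dI^\beta})$ is not merely injective but birational onto its image over $\mathcal{O}$, so that Rosenlicht's theorem (invoked in the excerpt) applies fibrewise to each prolongation order and the resulting finitely generated subfield is all of $\mathcal{F}_{k,\ast}$. Concretely, I would check that the inverse construction — take a finite jet of functions $A_\alpha$ in the $I$-coordinates, build the operator, and read off its $\EuScript{P}$-orbit invariants — is an algebraic map, using that $\square$ and the Tresse derivations are differential operators with rational coefficients on $\mathcal{O}$. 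Two routine but necessary side computations are: (i) verifying that $\frac{d}{dI_j}$ indeed maps $\mathcal{F}_{k,l}$ into $\mathcal{F}_{k,l+1}$ (invariance of Tresse derivations, which follows from naturality of the total differential and $\EuScript{P}$-invariance of the $I_j$), and (ii) checking that $\square$ maps $\mathcal{F}_{k,l}$ to $\mathcal{F}_{k,k+l}$ as stated — but both are already recorded in the text preceding the theorem, so I would simply cite them. The only genuinely new content is the birationality of the reconstruction, and for that the general-position hypothesis $\widehat{d}I_1\wedge\cdots\wedge\widehat{d}I_n\neq0$ is exactly what is needed.
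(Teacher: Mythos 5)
Your overall route is the same as the paper's own, very terse, justification: the theorem is derived there directly from the preceding observation that the restrictions of $I_{\alpha}=\frac{1}{\alpha !}\square (I^{\alpha })$ to $S_{A}$ are the coefficients $A_{\alpha }$ of $A$ in the coordinates $x_{i}=I_{i}(A)$, combined with the n-invariants principle and the Lie--Tresse/Rosenlicht background of \cite{KL}, \cite{LY}; your first paragraph reproduces exactly this. The genuine gap is in your second paragraph, at the claim that the jet of $S_{A}$ is reconstructible ``in a $\EuScript{P}$-equivariant way'' from the values of $I_{j}$, $I_{\alpha }$ and their Tresse derivatives, and that \emph{consequently} every $\EuScript{P}$-invariant of that jet is a function of these data. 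What the reconstruction actually yields is the jet of the operator written in the $I$-coordinates; two operator jets with identical data are therefore related by the base diffeomorphism identifying their $I$-coordinate systems, and for a proper pseudogroup (symplectomorphisms, volume-preserving maps --- the very setting of this paper) that diffeomorphism need not belong to $\EuScript{P}$. The paper itself signals this: in its n-invariants-principle theorem, coincidence of models is not sufficient for $\EuScript{P}$-equivalence and the extra hypothesis $\psi _{AB}\in \EuScript{P}$ is imposed. So the implication ``same data $\Rightarrow $ same $G_{k+l}(b)$-orbit'', which is what you need before you may say that $J$ is determined by the data, is not delivered by your argument as written.

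As a consequence, the burden in your last paragraph is misallocated: you treat the separation statement (``$J$ is determined by these data'') as settled and worry only about upgrading injectivity to birationality so that Rosenlicht's theorem applies. In fact the substantive step is the separation itself: one must show that, generically over $\mathcal{O}$, the listed invariants separate regular $G_{k+l}(b)$-orbits in the fibres $J_{b}^{l'}(\psi _{k})$ --- equivalently, that the number of functionally independent ones among them equals the codimension of the regular orbits at each prolongation order --- and this is precisely what the paper imports from the Lie--Tresse theorem and from \cite{LY} rather than from the bare $I$-coordinate reconstruction. Once separation on regular orbits is established, the passage to rational generation via Rosenlicht goes through as you indicate; without it, your construction only exhibits a subfield of $\mathcal{F}_{k,\ast }$, and the general-position hypothesis $\widehat{d}I_{1}\wedge \cdots \wedge \widehat{d}I_{n}\neq 0$ alone does not close this hole.
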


%%%%%%%%%%%%%%%%%%%%%%%%%%%%%%%%%%%%%%%%%%
\section{ $\EuScript{P}$-models and $\EuScript{P}$-equivalence
of linear differential operators}

The above theorem allow us to construct $\EuScript{P}$-models (or $%
\EuScript{P}$-normal forms) of linear differential operators.

Namely, similar to (\cite{LY}), we consider space $\Phi _{k}=\mathbb{R}%
^{n}\times \mathbb{R}^{\binom{n+k}{k}}$ with coordinates $\left(
y_{1},..,y_{n},Y_{\alpha },\left\vert \alpha \right\vert \leq k\right) .$

Let $I_{1},\ldots,I_{n}\in C^{\infty }\left( J^{l}\left( \psi _{k}\right)\right)$ 
be rational $\EuScript{P}$-invariants in general position for an
open set $\mathcal{O\subset }$ $J^{l}\left( \psi _{k}\right)$. Then any
linear differential operator $A\in {\rm Diff}_{k}\left( M\right) $ in a
domain $\mathcal{O}^{\prime }\subset M,$ where $S_{A}\left( \mathcal{O}%
^{\prime }\right) \subset $ $\mathcal{O}$, defines a map%
\begin{eqnarray*}
\phi _{B} &:&\mathcal{O}^{\prime }\rightarrow \Phi_{k}, \\
\phi _{B} &:&x\in \mathcal{O}^{\prime }\mathcal{\rightarrow }\left(
y_{1}=I_{1}\left( A\right) ,...,y_{n}=I_{n}\left( A\right) ,Y_{\alpha
}=I_{\alpha }\left( A\right) \right) .
\end{eqnarray*}%
We call a pair $\left( A,\mathcal{O}^{\prime }\right) $ \textit{adjusted} if
functions $\left( I_{1}\left( A\right) ,\ldots,I_{n}\left( A\right) \right)$
are coordinates in the domain $\mathcal{O}^{\prime }$, submanifold $\Sigma
_{A}=\phi _{A}\left( \mathcal{O}^{\prime }\right) \subset \Phi_{k}$ we call a 
$\EuScript{P}$-\textit{model of the operator} in the domain $\mathcal{O}^{\prime }$.

Let $B\in {\rm Diff}_{k}\left( M\right)$ be another differential
operator such that the pair $\left( B,\mathcal{O}^{\prime \prime }\right) $
is adjusted for the same $\EuScript{P}$-invariants $I_{1},\ldots, I_{n}$
and models operators $A$ and $B$ coincide, 
\begin{equation}
\phi _{A}\left( \mathcal{O}^{\prime }\right) =\phi _{B}\left( \mathcal{O}%
^{\prime \prime }\right) .  \label{mod}
\end{equation}%
Let $\psi _{AB}:\mathcal{O}^{\prime }\rightarrow \mathcal{O}^{\prime \prime
} $ be a diffeomorphism, such that $\psi _{AB}^{\ast }\left( I_{i}\left(
B\right) \right) =I_{i}\left( A\right) .$

\begin{theorem}[n-invariants principle]
Let $\left( A,\mathcal{O}^{\prime }\right) $ and $\left( B,\mathcal{O}%
^{\prime\prime}\right) $ be adjusted pairs for the same set of $\EuScript{P}$-invariants $I_{1},\ldots, I_{n}$. 

Then operators $A,B\in {\rm Diff}_{k}\left( M\right) $ are $\EuScript{P}$-equivalent in the open sets $\mathcal{O}^{\prime }$ and $\mathcal{O}^{\prime\prime}$ if and only if their 
$\EuScript{P}$-\textit{models coincide (\ref{mod}) and }$\psi _{AB}\in\EuScript{P}$.
\end{theorem}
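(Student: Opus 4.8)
The plan is to prove both implications by tracking how the invariants $I_1,\ldots,I_n$ and the derived invariants $I_\alpha=\frac{1}{\alpha!}\square(I^\alpha)$ transform. For the "only if" direction, suppose $\phi\in\EuScript{P}$ realizes the equivalence, i.e. $\phi$ carries $A$ (on $\mathcal{O}')$ to $B$ (on $\mathcal{O}''$) in the sense that the prolonged action sends the $l$-jet section $S_A$ to $S_B$. Since the $I_i$ are $\EuScript{P}$-invariants, $\phi^*(I_i(B))=I_i(A)$, so $\phi$ is a legitimate choice of $\psi_{AB}$; in particular $\psi_{AB}\in\EuScript{P}$. Moreover, because every $I_i$ and every $I_\alpha$ is $\EuScript{P}$-invariant (the latter by the naturality of $\square$ noted before Theorem~2), the functions $I_i(A),I_\alpha(A)$ on $\mathcal{O}'$ equal the pullbacks under $\phi$ of $I_i(B),I_\alpha(B)$ on $\mathcal{O}''$. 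Composing with $\phi_B$ and using $\phi_B\circ\phi=\phi_A$ pointwise (read off coordinate by coordinate in $\Phi_k$) gives $\phi_A(\mathcal{O}')=\phi_B(\mathcal{O}'')$, which is exactly~(\ref{mod}).

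For the "if" direction, assume (\ref{mod}) holds and that $\psi_{AB}\in\EuScript{P}$; I must show $\psi_{AB}$ actually carries $A$ to $B$, not merely matching the chosen invariants $I_i$. The key point is the sharpened content of the $n$-invariants construction recalled just above Theorem~1: in the adapted coordinates $x_i=I_i(A)$ on $\mathcal{O}'$, the values $I_\alpha(A)$ are \emph{exactly} the coefficients $A_\alpha$ of the operator $A$; likewise $I_\alpha(B)$ are the coefficients of $B$ in the coordinates $x_i=I_i(B)$ on $\mathcal{O}''$. Now $\psi_{AB}$ is a diffeomorphism with $\psi_{AB}^*(I_i(B))=I_i(A)$, so in these two adapted coordinate systems $\psi_{AB}$ is the identity map. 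Equality of models (\ref{mod}) forces $I_\alpha(A)=I_\alpha(B)\circ\psi_{AB}$ on $\mathcal{O}'$ for every $\alpha$ with $|\alpha|\le k$; translated through the coefficient identification this says precisely that $\psi_{AB}$ transforms the coefficients of $A$ into those of $B$, i.e. $\psi_{AB}$ sends the operator $A$ to the operator $B$. Hence $A$ and $B$ are $\EuScript{P}$-equivalent via $\psi_{AB}\in\EuScript{P}$.

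The main obstacle is the middle step: justifying rigorously that the $I_\alpha$ recover \emph{all} the coefficients and that no further compatibility condition is hidden. Concretely, one must check that the adapted-coordinate expression of the universal operator $\square=\sum_{|\alpha|\le k}u_\alpha\, d^{|\alpha|}/dx^\alpha$ is stable under the coordinate change, so that evaluating $\square(I^\alpha)$ at $A$ in the $I_i(A)$-coordinates does produce $A_\alpha$ (and symmetrically for $B$). This is where transitivity of $\EuScript{P}$ on $M$ and the fact that $\square$ is built from total derivatives (hence transforms tensorially, commuting with prolonged diffeomorphisms) are used. Once this identification is in hand, both implications are essentially formal bookkeeping on the map $\phi_A=\phi_B\circ\psi_{AB}$, and the theorem follows. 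The analytic-versus-smooth subtlety flagged in the introduction does not intervene here, since we only use that $\psi_{AB}$ is a genuine element of the pseudogroup, which is given.
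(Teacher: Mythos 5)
Your argument is correct and follows the route the paper itself intends: the paper offers no separate proof of this theorem, deriving it from the observation stated before Theorem~1 that the invariants $I_{\alpha}=\frac{1}{\alpha !}\square (I^{\alpha })$ restricted to $S_{A}$ recover the coefficients of $A$ in the adapted coordinates $x_{i}=I_{i}(A)$, which is exactly the key step you use in both directions. Your bookkeeping via $\phi _{A}=\phi _{B}\circ \psi _{AB}$ and the identification of $\psi _{AB}$ with the identity in adapted coordinates is the same mechanism as in the cited treatment for the full diffeomorphism pseudogroup, so no further comment is needed.
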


\begin{remark}
In the case, when the Lie equation completely defines the pseudogroup $\EuScript{P}$, the last requirement of the theorem could be made to be
more constructive. Namely, diffeomorphism $\psi _{AB}$ should satisfy the
Lie equation.
\end{remark}

%%%%%%%%%%%%%%%%%%%%%%%%%%%%%%%%%%%%%%%%%%%
\section{$\EuScript{P}$- invariants of symbols}

At first, we remark, that symbols of linear differential operators from $%
{\rm Diff}_{k}\left( M\right) ,$ at a point $a\in M,$ are symmetric
tensors $\sigma \in S^{k}T_{a}\left( M\right) $.

To describe $\EuScript{P}$- invariants of symbols, we'll fix a base
point $b\in M$ and denote by $V$ the cotangent space $T_{b}^{\ast }\left(
M\right) .$

Then the symbols of differential operators are elements of the symmetric
power $S^{k}V^{\ast },$ and, hence, could be considered as homogeneous
polynomial functions of degree $k$ on vector space $V.$

Thus, the action of the pseudogroup $\EuScript{P}$ on symbols is
reduced to the action of the algebraic Lie group $G_{1}\left( b\right)
\subset %\limfunc{GL}
{\rm GL}\left( V^{\ast }\right) $ on the space of these
polynomials.

There are various algorithmic methods to find such invariants, see, for
examples (\cite{Dec},\cite{Olv},\cite{Pop}).

We proposed in (\cite{BL},\cite{LR}) methods of finding algebraic invariants
by using differential invariants. Thus, this paper, in some sense, conclude
the interplay between differential and algebraic invariants.

The most important are actions of primitive pseudogroups, where the Lie
algebras $\mathfrak{g}_{1}$ ( also called \textit{linear isotropy algebras})
of the correspondent Lie groups $G_{1}\left( b\right) $ are given in the
Cartan's classification list (see,\cite{Sing},\cite{GQS}).

W'll consider here the most common cases of these Lie algebras: $\mathfrak{g%
}_{1}=\mathfrak{sl}\left( V^{\ast }\right)$ %\limfunc{sl}
$\mathfrak{sl}\left( V^{\ast }\right) $ for the pseudogroup of volume
preserving diffeomorphisms, and $\mathfrak{g}_{1}=%\limfunc{sp}
\mathfrak{sl}\left( V^{\ast}\right)$ for the pseudogroup $\mathfrak{S}$ %$\mathfrak{S}$ % , $\mathfrak{P}$, $\mathfrak{\Gamma}$,%, %$\EuScript{Sp}$, ${\EuScript Sp}$, ${\EuScript SP}$, $\EuScript{S}$$\EuScript{P}$, $\mathcal{Sp}$ 
of symplectomorphisms.

The cases $\mathfrak{g}_{1}=%\limfunc{sl}
\mathfrak{sl}\left( V^{\ast }\right) $ and $%
\mathfrak{g}_{1}=%\limfunc{gl}
\mathfrak{gl}\left( V^{\ast }\right) $ were considered in (%
\cite{LY}), for this reason, we consider here the cases $\mathfrak{g}_{1}=%\limfunc{sp}
\mathfrak{sp}\left( V^{\ast }\right) $ (and also $\mathfrak{g}_{1}=%\limfunc{so%
\mathfrak{so}\left( V^{\ast }\right) )$ to illustrate the method of transvectants.

Remark, that the codimensions of regular $\mathfrak{g}_{1}-$ orbits in $%
S^{k}T\left( M\right) $ significantly exceed $\dim M,$ when $k>2,$ and,
therefore, we may use invariants of the symbols in realization of the $n$%
-invariant principle.

This observation explains the use of transvectants because we are interested
in practical methods of finding $n$ invariants, but not in the problem of
finding all possible invariants.

The case of the second order operators could be elaborated by using of the
Levi-Civita connections and related to them quantizations, as it was done in
(\cite{LY}).

\subsection{Symplectic transvectants}

Let $\left( M,\omega \right) $ be a symplectic manifold, where $\omega \in
\Omega ^{2}\left( M\right) $ is the structure form, and $\EuScript{P}$ = %
$\mathfrak{S}$ the pseudogroup of local symplectomorphisms.

Then the Lie equation, defining this pseudogroup, is the following 
\begin{equation*}
%\mathfrak{S} = \left\{ \phi mathfrak{\in }%\limfunc{Diffeo}
\mathfrak{S} = \left\{ \phi \in {\rm Diffeo}(M), \phi _{\ast }(\omega) =\omega \right\} ,
\end{equation*}%
and the linear isotropy algebra $\mathfrak{g}_{1}=%\limfunc{sp}
{\rm sp}\left( V^{\ast
}\right) .$

Let $\left\{ e_{1},..,e_{n},f_{1},...,f_{n}\right\} \subset V=T_{b}^{\ast
},\dim M=2n,$ be a canonical basis for the structure form $\omega _{b}\in
\Lambda ^{2}\left( V\right) ,$ i.e.%
\begin{equation*}
\omega _{b}=%\dsum
\sum\limits_{i=1}^{n}e_{i}\wedge f_{i}.
\end{equation*}%
Then,as we have seen, symbols of the linear differential operators on $M$ at
the point $b\in M,$ are symmetric tensors $\sigma \in S^{k}V^{\ast },$ that
we consider as homogeneous polynomials on $V.$

Denote by $\mathcal{S}=\oplus_{k\geq 0}S^{k}V^{\ast }$ the polynomial
algebra on $V$, and the structure form $\omega _{b}$ we present as
bi-differential operator 
\begin{equation*}
\widehat{\omega }:S\mathcal{\otimes S\rightarrow S\otimes S},
\end{equation*}%
that acts as follows%
\begin{equation*}
\widehat{\omega }\left( P\otimes Q\right) =\frac{1}{2}%\dsum
\sum\limits_{i=1}^{n}%
\left( e_{i}\left( P\right) \otimes f_{i}\left( Q\right) -f_{i}\left(
P\right) \otimes e_{i}\left( Q\right) \right) ,
\end{equation*}%
where we denoted by $e_{i}\left( P\right) ,f_{i}\left( P\right) $ the
directional derivatives of polynomial $P$ along the basis vectors $e_{i}$
and $f_{i}.$

Then by \textit{symplectic transvectant} of order $r$ we mean the following
bi-differential operator%
\begin{equation*}
P\otimes Q\in S\otimes\mathcal{S}\rightarrow [ P, Q_r ]_r %lbrack }P,Q\mathcal{]}_{r}
\in\mathcal{S},
\end{equation*}%
where 
\begin{equation*}
[P, Q]_{r}=\mu \big( \widehat{\omega }^{r}(P\otimes Q) \big) ,
\end{equation*}%
and $\mu :\mathcal{S\otimes S\rightarrow S}$ is the multiplication in the
algebra $\mathcal{S}.$

Remark that operators $\widehat{\omega }$ and $\mu$, as well as the
symplectic transfectants, are $%\limfunc
{\rm sp}-$ invariants.

In the canonical coordinates $\left( x_{1},...,x_{n},y_{1},...,y_{n}\right) $
on $V$ the symplectic transfectants have the following form:%
\begin{multline*}%{equation*}
[P,Q]_{r}\!=\! 2^{-r}%\dsum
\sum\limits_{l=0}^{r}%\dsum
\sum\limits_{l_{1}+\ldots
+l_{n}=l}%\dsum
\sum\limits_{m_{1}+\ldots +n_{n}=r-l}\left( -1\right) ^{r-l}\binom{%
r}{l}\binom{l}{l_{1}...l_{n}}\\
\times\binom{r-l}{m_{1}\cdots m_{n}}
\frac{\partial^{r}P}{\partial x^{l}\partial y^{m}}\frac{\partial ^{r}Q}{\partial
x^{m}\partial y^{l}},
\end{multline*}%{equation*}%
where%
\begin{equation*}
\frac{\partial ^{r}P}{\partial x^{l}\partial y^{m}}=\frac{\partial ^{r}P}{%
\partial x_{1}^{l_{1}}\cdots \partial x_{n}^{l_{n}}\partial
y_{1}^{m_{1}}\cdots \partial y_{n}^{m_{n}}}.
\end{equation*}

Remark, that 
\begin{equation*}
\deg \left( [P,Q]_{r}\right) =\deg \left( P\right) +\deg \left( Q\right) -2r,
\end{equation*}%
$[P,Q]_{1}$ coincides with the Poisson bracket, and 
\begin{multline*}%{eqnarray*}
4[P,Q]_{2}= %&=&%\dsum
\sum\limits_{i=1}^{n}\left( \frac{\partial ^{2}P}{\partial
x_{i}^{2}}\frac{\partial ^{2}Q}{\partial y_{i}^{2}}+\frac{\partial ^{2}Q}{%
\partial x_{i}^{2}}\frac{\partial ^{2}P}{\partial y_{i}^{2}}\right)\\
+2%\dsum
\sum\limits_{i\neq j}\left( \frac{\partial ^{2}P}{\partial x_{i}\partial
x_{j}}\frac{\partial ^{2}Q}{\partial y_{i}\partial y_{j}}+\frac{\partial
^{2}Q}{\partial x_{i}\partial x_{j}}\frac{\partial ^{2}P}{\partial
y_{i}\partial y_{j}}\right)  %&&
-2%\dsum
\sum\limits_{i,j}\frac{\partial ^{2}P}{\partial x_{i}\partial y_{j}}%
\frac{\partial ^{2}Q}{\partial y_{i}\partial x_{j}}.
\end{multline*}%{eqnarray*}%
The following statement follows directly from the definition of the
transfectant.

\begin{proposition}
The transvectants mappings $P\times Q\rightarrow $ $[P,Q]_{k}$ are bilinear
symmetric mappings if the order $k$ is even, and skew symmetric if the order 
$k$ is odd.They are trivial if $k>\min \left( \deg P,\deg Q\right)$.
\end{proposition}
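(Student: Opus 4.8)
The plan is to reduce the whole statement to two elementary properties of the operator $\widehat{\omega}$ together with the behaviour of the transposition of the two tensor factors. Bilinearity comes for free: $\widehat{\omega}$ is assembled from the directional derivatives $e_i(\cdot),f_i(\cdot)$, each linear, so $P\otimes Q\mapsto\widehat{\omega}(P\otimes Q)$ is bilinear; composing with further powers of $\widehat{\omega}$ and with the multiplication $\mu$ (also bilinear) preserves this, so $[\,\cdot\,,\,\cdot\,]_r$ is bilinear for every $r$. Nothing here needs checking beyond the definition.

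For the (skew)symmetry I would introduce the flip $\tau\colon\mathcal{S}\otimes\mathcal{S}\to\mathcal{S}\otimes\mathcal{S}$, $\tau(P\otimes Q)=Q\otimes P$, and prove the single intertwining identity $\tau\circ\widehat{\omega}=-\widehat{\omega}\circ\tau$. This is a one-line term-by-term comparison: writing out $\widehat{\omega}(Q\otimes P)=\tfrac12\sum_i\big(e_i(Q)\otimes f_i(P)-f_i(Q)\otimes e_i(P)\big)$ and $\tau\widehat{\omega}(P\otimes Q)=\tfrac12\sum_i\big(f_i(Q)\otimes e_i(P)-e_i(Q)\otimes f_i(P)\big)$, one sees they differ exactly by the overall sign, which is produced by the antisymmetry of $\omega_b$ (the minus sign separating the $e_i\otimes f_i$ and $f_i\otimes e_i$ terms in the definition of $\widehat{\omega}$). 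Iterating gives $\tau\circ\widehat{\omega}^{\,r}=(-1)^r\,\widehat{\omega}^{\,r}\circ\tau$, and since $\mu$ is commutative we have $\mu\circ\tau=\mu$, hence
\[
[Q,P]_r=\mu\big(\widehat{\omega}^{\,r}(Q\otimes P)\big)=\mu\big(\widehat{\omega}^{\,r}\tau(P\otimes Q)\big)=(-1)^r\mu\big(\tau\widehat{\omega}^{\,r}(P\otimes Q)\big)=(-1)^r[P,Q]_r ,
\]
which is symmetry for even $r$ and skew-symmetry for odd $r$.

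For the triviality claim, one notes that a single application of $\widehat{\omega}$ replaces each tensor factor by one of its first-order directional derivatives, so by an easy induction $\widehat{\omega}^{\,r}(P\otimes Q)$ is a finite linear combination of tensors $D'P\otimes D''Q$ with $D',D''$ constant-coefficient differential operators of order exactly $r$. If $r>\deg P$ then $D'P=0$ for every such $D'$ (a polynomial of degree $\deg P$ has no nonzero derivative of order exceeding $\deg P$), and symmetrically if $r>\deg Q$; hence $\widehat{\omega}^{\,r}(P\otimes Q)=0$ as soon as $r>\min(\deg P,\deg Q)$, and applying $\mu$ gives $[P,Q]_r=0$. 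I do not anticipate any genuine obstacle: the only point demanding care is the bookkeeping in $\tau\widehat{\omega}=-\widehat{\omega}\tau$ and, in the induction for $\widehat{\omega}^{\,r}$, keeping track of the fact that each step differentiates both slots exactly once; the degree–vanishing argument is then purely formal.
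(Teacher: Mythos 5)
Your proposal is correct and is essentially the argument the paper has in mind: the paper simply asserts that the proposition ``follows directly from the definition of the transvectant,'' and your verification --- bilinearity from linearity of the directional derivatives, the intertwining identity $\tau\circ\widehat{\omega}=-\widehat{\omega}\circ\tau$ iterated to give $[Q,P]_r=(-1)^r[P,Q]_r$ via commutativity of $\mu$, and vanishing because $\widehat{\omega}^{\,r}$ differentiates each slot $r$ times --- is exactly that direct check, spelled out cleanly. No gaps.
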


Thus symplectic invariants of symbols, having order $k,$ are %\limfunc
${\rm sp}%
\left( V\right)$ - invariant polynomial functions on the space of symbols $%
S^{k}V^{\ast }$.

Namely, they do produce zero order symplectic differential invariants of
linear differential operators.

To find these $%\limfunc
{\rm sp}$-invaiants we take a symbol $P\in S^{p}V^{\ast
}, $having order $p,$ and remark that the transfectants generate the linear
operators 
\begin{equation*}
Q\in S^{q}V^{\ast }\rightarrow \lbrack P,Q]_{k}\in S^{p+q-2k}V^{\ast },
\end{equation*}%
where $k\leq \min \left( p,q\right) .$

Assume now, that the order $p$ is even. Then, we get operators%
\begin{equation*}
A_{P,q}:S^{q}V^{\ast }\rightarrow S^{q}V^{\ast },\ \ A_{P,q}:Q\rightarrow
\lbrack P,Q]_{\frac{p}{2}},
\end{equation*}%
for all $q\geq \displaystyle\frac{p}{2}$.

For general order $p,$ we substitute tensor $P\in S^{p}V^{\ast }$ by the
transfectants $P_{2l}=[P,P]_{2l},$ where $2l<p,$ and get operators 
\begin{equation*}
A_{P,l}:S^{q}V^{\ast }\rightarrow S^{q}V^{\ast },\ \ A_{P,l}:Q\rightarrow
\lbrack P_{2l},Q]_{p-2l},
\end{equation*}%
where $q\geq p-2l.$

\begin{theorem}
Functions $I_{l,k}:S^{p}V^{\ast }\rightarrow \mathbb{R}$, 
\begin{equation*}
I_{l,k}\left( P\right) =%\limfunc
{\rm Tr}\left( A_{P,l}^{k}\right) ,
\end{equation*}%
where $p-q\leq 2l\leq p,\ 1\leq k\leq \binom{2n+q-1}{q},$ for general degree 
$p,$and 
\begin{equation*}
J_{k,q}\left( P\right) =%\limfunc
{\rm Tr}A_{P,q}^{k},
\end{equation*}%
where $2q\geq p,\ 1\leq k\leq \displaystyle\binom{2n+q-1}{q}$, for even degree $p,$ are $%\limfunc
\frak{sp}$ - invariant polynomials on $S^{p}V^{\ast }$ of degree 2k.%
\newline
\newline
Bilinear forms 
\begin{equation*}
P\otimes Q\rightarrow \lbrack P,Q]_{p}\in \mathbb{R}
\end{equation*}%
are nondegenerate $%\limfunc
{\rm sp}-$ invariant skew symmetric 2-forms on $%
S^{p}V^{\ast },$ if $p$ is odd, and symmetric if $p$- even.
\end{theorem}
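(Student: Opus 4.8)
The plan is to treat the invariance and polynomiality of $I_{l,k}$ and $J_{k,q}$ as formal consequences of the already-recorded fact that $\widehat{\omega}$ and $\mu$ — hence all symplectic transvectants — are $\mathfrak{sp}$-invariant, and to reserve the actual work for the nondegeneracy of $[\,\cdot\,,\cdot\,]_{p}$.

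\emph{Invariance and polynomiality.} Since $\widehat{\omega}$ and $\mu$ commute with the action of the group $G_{1}(b)\subset\mathrm{GL}(V^{\ast})$, every transvectant is equivariant, $[g\!\cdot\!P,g\!\cdot\!Q]_{r}=g\!\cdot\![P,Q]_{r}$; in particular $g\!\cdot\!P_{2l}=[g\!\cdot\!P,g\!\cdot\!P]_{2l}$, so on $S^{q}V^{\ast}$ one obtains
\[
A_{g\cdot P,\,l}=\rho_{q}(g)\circ A_{P,l}\circ\rho_{q}(g)^{-1},
\]
with $\rho_{q}$ the natural representation of $G_{1}(b)$ on $S^{q}V^{\ast}$ (and the same conjugation identity holds for $A_{P,q}$ in the even-degree case). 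Taking $k$-th powers and then traces kills the conjugation, so $I_{l,k}$ and $J_{k,q}$ are $G_{1}(b)$-invariant, in particular $\mathrm{Sp}(V^{\ast})$-invariant, and differentiating the group action yields $\mathfrak{sp}$-invariance. The degree is then read off from the number of occurrences of $P$: the map $P\mapsto P_{2l}=[P,P]_{2l}$ is homogeneous quadratic, while $R\mapsto\bigl(Q\mapsto[R,Q]_{p-2l}\bigr)$ is linear with values in $\mathrm{End}(S^{q}V^{\ast})$ (and the inequalities on $l$ and $q$ in the statement are exactly those making $A_{P,l}$ an endomorphism of $S^{q}V^{\ast}$), so $P\mapsto A_{P,l}$ is homogeneous of degree $2$ and $I_{l,k}=\mathrm{Tr}\,A_{P,l}^{k}$ is a polynomial of degree $2k$; the analogous count applies to $J_{k,q}$. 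The bound $k\le\binom{2n+q-1}{q}=\dim S^{q}V^{\ast}$ is not needed for the assertion itself; it merely records that by the Cayley--Hamilton theorem the traces of higher powers are polynomial combinations of the listed ones.

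\emph{The bilinear form $[\,\cdot\,,\cdot\,]_{p}$.} As $\deg[P,Q]_{p}=p+p-2p=0$, the pairing takes values in $S^{0}V^{\ast}=\mathbb{R}$ and so is a genuine bilinear form; its $\mathfrak{sp}$-invariance is the case $r=p$ of the equivariance above, and its symmetry for $p$ even / skew-symmetry for $p$ odd is precisely the case $k=p$ of the preceding Proposition. The crux is nondegeneracy, which I would establish by direct computation in the monomial basis $\{\,x^{a}y^{b}:|a|+|b|=p\,\}$ of $S^{p}V^{\ast}$, where $x^{a}y^{b}=x_{1}^{a_{1}}\cdots x_{n}^{a_{n}}y_{1}^{b_{1}}\cdots y_{n}^{b_{n}}$. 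In $[P,Q]_{p}$ all $p$ derivatives are spent, so $\partial^{p}P$ and $\partial^{p}Q$ are constant tensors, and inspection of the multi-index formula forces $[x^{a}y^{b},x^{c}y^{d}]_{p}=0$ unless $(c,d)=(b,a)$, whereas in the surviving case the binomial and multinomial factors collapse to
\[
[x^{a}y^{b},x^{b}y^{a}]_{p}=2^{-p}(-1)^{|b|}\,p!\,a!\,b!\;\neq 0 .
\]
Thus in this basis the Gram matrix of $[\,\cdot\,,\cdot\,]_{p}$ is, up to nonzero scalar factors, the permutation matrix of the involution $(a,b)\leftrightarrow(b,a)$ of the index set, hence invertible; this is nondegeneracy. (The same display re-confirms the parity pattern, since $(-1)^{|a|}=(-1)^{p-|b|}=(-1)^{p}(-1)^{|b|}$, giving $[Q,P]_{p}=(-1)^{p}[P,Q]_{p}$.)

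The single genuinely computational point is the collapse of the multi-index sum to the displayed coefficient — routine, but requiring care with the multinomial bookkeeping; everything else is formal. The idea behind the nondegeneracy, worth flagging in the text, is that under the identification of $S^{p}V^{\ast}$ with homogeneous degree-$p$ polynomials on $V$ the form $[\,\cdot\,,\cdot\,]_{p}$ is the symplectic analogue of the classical apolar (Cayley) pairing of binary forms; equivalently, $[P,Q]_{p}$ is the complete $\omega^{\otimes p}$-contraction of the symmetric $p$-th gradients of $P$ and $Q$, and its nondegeneracy follows from that of $\omega$.
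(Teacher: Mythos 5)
Your proof is correct and follows the same route the paper intends: the paper states this theorem without a written proof, treating it as an immediate consequence of the remark that $\widehat{\omega}$ and $\mu$ (hence all symplectic transvectants) are $\mathrm{sp}$-invariant, which is exactly your conjugation-plus-trace argument; your contribution beyond the paper is the explicit nondegeneracy computation. That computation checks out: with the paper's formula for $[P,Q]_{r}$ at $r=p$, only the term with the full multi-indices survives, giving $[x^{a}y^{b},x^{b}y^{a}]_{p}=2^{-p}(-1)^{|b|}\,p!\,a!\,b!$ and zero otherwise, so the Gram matrix is a generalized permutation matrix and the form is nondegenerate (and your parity remark recovers the symmetric/skew dichotomy). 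One small wrinkle you gloss over: the ``analogous count'' for the even-degree family does \emph{not} give degree $2k$, since $A_{P,q}\colon Q\mapsto[P,Q]_{p/2}$ is linear (not quadratic) in $P$, so $J_{k,q}=\mathrm{Tr}\,A_{P,q}^{k}$ is homogeneous of degree $k$; this is an imprecision in the theorem's statement rather than a gap in your argument, but you should state the degree for $J_{k,q}$ as $k$ instead of appealing to an analogy that does not hold.
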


\begin{remark}
This theorem shows that the maximal degree of these invariant polynomials
equals to 
\begin{equation*}
k_{\max }=\binom{3n-1}{n},
\end{equation*}%
and the minimal is $k_{\min }=2.$
\end{remark}

%======================================================
\subsection{Regular %\limfunc
${\rm sp}$-orbits}

The Lie algebra $%\limfunc
{\rm sp}\left( V\right) $ we identify with space $%
S^{2}\left( V^{\ast }\right) $ equipped with the Poisson bracket $P,Q\in
S^{2}\left( V^{\ast }\right) \rightarrow \left( P,Q\right) _{1}\in
S^{2}\left( V^{\ast }\right) $ given by the first order transvectant .

Let $\left( x_{1},...,x_{n},y_{1},...,y_{n}\right) $ be the canonical
coordinates in the symplectic space $V,$ then we choose the basic in $%
S^{2}\left( V^{\ast }\right) $ as union of the following disjoint sets $%
B^{+}\cup B^{-}\cup C,$ where $B^{+}=\left\{ y_{i}y_{j},i,j=1,...,n,i\leq
j\right\} ,B^{-}=\left\{ x_{i}x_{j},i,j=1,...,n,i\leq j\right\} ,C=\left\{
x_{i}y_{j},i,j=1,...,n\right\} .$

Any quadric $Q\in S^{2}\left( V^{\ast }\right) $ generates the Hamiltonian
derivation $X_{Q}:S^{k}\left( V^{\ast }\right) \rightarrow S^{k}\left(
V^{\ast }\right) ,$ where $X_{Q}\left( P\right) =[P,Q]_{1},$ or in canonical
coordinates 
\begin{equation*}
X_{Q}=%\dsum
\sum\limits_{i=1}^{n}\left( \frac{\partial Q}{\partial y_{i}}\frac{%
\partial }{\partial x_{i}}-\frac{\partial Q}{\partial x_{i}}\frac{\partial }{%
\partial y_{i}}\right) .
\end{equation*}

Then vector fields 
\begin{eqnarray*}
b_{ij}^{+} &=&X_{y_{i}y_{j}}=y_{i}\frac{\partial }{\partial x_{j}}+y_{j}%
\frac{\partial }{\partial x_{i}}, \\
b_{ij}^{-} &=&X_{x_{i}x_{j}}=-x_{i}\frac{\partial }{\partial y_{j}}-x_{j}%
\frac{\partial }{\partial y_{i}}, \\
c_{ij} &=&X_{x_{i}y_{j}}=x_{i}\frac{\partial }{\partial x_{j}}-y_{j}\frac{%
\partial }{\partial y_{i}},
\end{eqnarray*}%
give us a basis into Lie algebra $%\limfunc
{\rm sp}\left( V\right) .$

To estimate dimensions of $%\limfunc
{\rm sp}$-orbits in to $S^{k}\left( V^{\ast
}\right) ,$ we have to estimate dimensions of subspaces $\left\{ B^{+}\left(
P\right) ,B^{-}\left( P\right) ,C\left( P\right) \right\} $ into $%
S^{k}\left( V^{\ast }\right) ,$ generated a polynomial $P\in S^{k}\left(
V^{\ast }\right) .$

To this end, we denote by $S_{i}\left( x\right) $ and $S_{j}\left( y\right) $
the spaces of homogeneous polynomials in $x$ and $y,$ having degrees $i$ and 
$j$ respectively.

Then we have the following direct decomposition 
\begin{equation*}
S^{k}\left( V^{\ast }\right) =\oplus _{\alpha +\beta =k}S_{\alpha ,\beta },
\end{equation*}%
where%
\begin{equation*}
S_{\alpha ,\beta }=S_{\alpha }\left( x\right) \otimes S_{\beta }\left(
y\right) ,
\end{equation*}%
and 
\begin{eqnarray*}
b_{ij}^{+} &:&S_{\alpha \beta }\rightarrow S_{\alpha -1,\beta +1},\  \\
b_{ij}^{-} &:&S_{\alpha \beta }\rightarrow S_{\alpha +1,\beta -1},\ \  \\
c_{ij} &:&S_{\alpha \beta }\rightarrow S_{\alpha ,\beta }.
\end{eqnarray*}

Take now a polynomial $P\in S^{k}\left( V^{\ast }\right) $ of the form: $%
P=P_{0}+P_{1},$ where $P_{0}\in S_{k}\left( x\right) ,P_{1}\in S_{k}\left(
y\right) .$

Then,%
\begin{eqnarray*}
B^{+}\left( P\right) &=&B^{+}\left( P_{0}\right) \subset S_{k-1},_{1},\  \\
B^{-}\left( P\right) &=&B^{-}\left( P_{1}\right) \subset S_{1,k-1}, \\
C\left( P_{0}\right) &\subset &S_{k,0},\ C\left( P_{1}\right) \subset
S_{0,k}.
\end{eqnarray*}%
Thus, $B^{+}\left( P_{0}\right) $ and $B^{-}\left( P_{1}\right) ,$ belong to
different vector spaces and are linear independent, when $k\geq 3,$ and
therefore 
\begin{equation*}
\dim \left( B^{+}\left( P_{0}\right) \oplus B^{-}\left( P_{1}\right) \right)
=n\left( n+1\right) ,
\end{equation*}%
for general polynomials $P_{0},P_{1}.$

Moreover, dimensions $C\left( P_{0}\right) \subset S_{k,0}$ and $C\left(
P_{1}\right) \subset S_{0,k}$ equals of dimensions of $%\limfunc
\frak{gl}\left(
n\right) $-orbits of $P_{0}\in S_{k}\left( x\right) \ $and $P_{1}\in
S_{k}\left( y\right) $ and, therefore, equals $n^{2},$ when $k\geq 3.$

To see this, it is enough to take polynomials $P_{0}\left( x\right)
,P_{1}\left( y\right) $ such that the polynomials $\det \left\Vert x_{i}%
\displaystyle\frac{\partial P_{0}}{\partial x_{j}}\right\Vert $ and $\det \left\Vert y_{i}%
\displaystyle\frac{\partial P_{1}}{\partial y_{j}}\right\Vert $ do not equal to zero.

Summarizing, we get the following

\begin{proposition}
Regular $%\limfunc
\frak{sp}\left( V\right) -$ orbits \ in $S^{k}\left( V^{\ast
}\right) $ has dimension $\dim %\limfunc
\frak{sp}\left( V\right) =\dim S^{2}\left(
V^{\ast }\right) ,$ when $k\geq 3.$
\end{proposition}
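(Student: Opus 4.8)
The plan is to show two inequalities: that the dimension of a regular $\mathfrak{sp}(V)$-orbit in $S^k(V^\ast)$ is at most $\dim\mathfrak{sp}(V)$, which is automatic since the orbit is the image of the map $\mathfrak{sp}(V)\to S^k(V^\ast)$, $X\mapsto X(P)$; and that for a polynomial $P$ in general position this map has full rank, i.e. the stabilizer subalgebra $\mathfrak{sp}(V)_P=\{X\in\mathfrak{sp}(V):X(P)=0\}$ is trivial. Since the set of $P$ with trivial stabilizer is Zariski-open, it suffices to exhibit one such $P$ together with the transversality computation, and then invoke upper semicontinuity of the stabilizer dimension.

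First I would fix the test polynomial $P=P_0+P_1$ with $P_0\in S_k(x)$, $P_1\in S_k(y)$ chosen so that $\det\|x_i\,\partial P_0/\partial x_j\|\neq 0$ and $\det\|y_i\,\partial P_1/\partial y_j\|\neq 0$, exactly as flagged in the preceding text. Using the grading $S^k(V^\ast)=\oplus_{\alpha+\beta=k}S_{\alpha,\beta}$ and the three types of generators $b^+_{ij},b^-_{ij},c_{ij}$ of $\mathfrak{sp}(V)$ with their bidegree shifts $(-1,+1)$, $(+1,-1)$, $(0,0)$, I would observe that $B^+(P)=B^+(P_0)\subset S_{k-1,1}$, $B^-(P)=B^-(P_1)\subset S_{1,k-1}$, and the $c_{ij}$ act within $S_{k,0}\oplus S_{0,k}$ sending $P_0$ into $S_{k,0}$ and $P_1$ into $S_{0,k}$. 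For $k\ge 3$ these four target spaces $S_{k-1,1}$, $S_{1,k-1}$, $S_{k,0}$, $S_{0,k}$ are pairwise distinct summands, so the images $B^+(P_0)$, $B^-(P_1)$, $C(P_0)$, $C(P_1)$ are automatically linearly independent, and the rank of the orbit map at $P$ is the sum of the ranks of the three blocks.

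Next I would compute each block rank separately. The map $Q\mapsto X_Q(P_0)=[P_0,Q]_1$ restricted to $Q\in B^+$ has rank equal to $\dim B^+(P_0)$; since $b^+_{ij}(P_0)=y_i\,\partial P_0/\partial x_j+y_j\,\partial P_0/\partial x_i$, non-vanishing of $\det\|x_i\,\partial P_0/\partial x_j\|$ (equivalently, that the $n^2$ polynomials $x_i\,\partial P_0/\partial x_j$, and hence the $\tfrac{n(n+1)}{2}$ symmetrized combinations $y_i\,\partial P_0/\partial x_j+y_j\,\partial P_0/\partial x_i$, are independent) gives $\dim B^+(P_0)=\tfrac{n(n+1)}{2}=\dim B^+$. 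Symmetrically $\dim B^-(P_1)=\tfrac{n(n+1)}{2}=\dim B^-$. Finally, $c_{ij}=x_i\,\partial/\partial x_j-y_j\,\partial/\partial y_i$ acts on $P_0$ as $x_i\,\partial P_0/\partial x_j$ and on $P_1$ as $-y_j\,\partial P_1/\partial y_i$; the two determinant conditions say precisely that $C(P_0)$ has dimension $n^2=\dim\mathfrak{gl}(n)$ and likewise $C(P_1)$ (this is the standard fact that a generic form of degree $\ge 3$ has trivial $\mathfrak{gl}(n)$-stabilizer, witnessed by the Hessian-type determinant). Adding up: $n(n+1)+n^2=2n^2+n=\dim S^2(V^\ast)=\dim\mathfrak{sp}(V)$, so the orbit map is injective on the Lie algebra at $P$, the stabilizer is trivial, and the orbit through $P$ has the claimed dimension.

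The main obstacle, and the step deserving the most care, is verifying that the three determinant/independence conditions really do hold for \emph{some} $P_0,P_1$ and, more importantly, that they are the \emph{right} conditions — i.e. that linear independence of the combined image forces the stabilizer of $P$ in all of $\mathfrak{sp}(V)$ (not just in the three coordinate subspaces $B^+,B^-,C$) to vanish. This is where the bidegree decomposition does the real work: because a general $X\in\mathfrak{sp}(V)$ decomposes as $X=X^++X^-+X^0$ with $X^\pm$ in the span of the $b^\pm_{ij}$ and $X^0$ in the span of the $c_{ij}$, and these push $P$ into the three distinct graded pieces, $X(P)=0$ forces each of $X^+(P_0)=0$, $X^-(P_1)=0$, $X^0(P_0)=X^0(P_1)=0$ separately, and the rank computations kill each piece. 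I would close by remarking that the locus where all three nonvanishing conditions hold is Zariski-open and nonempty, hence dense, so every polynomial in a suitable neighbourhood of $P$ is regular with orbit of the same dimension, which establishes the proposition for all $k\ge 3$.
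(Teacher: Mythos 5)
Your argument is essentially the paper's own proof: the same test polynomial $P=P_0+P_1$, the same bigraded decomposition $S^k(V^\ast)=\oplus_{\alpha+\beta=k}S_{\alpha,\beta}$ with the generators $b^{\pm}_{ij},c_{ij}$ and their bidegree shifts, and the same count $n(n+1)+n^2=\dim\mathfrak{sp}(V)$, your remarks on the block-splitting of the stabilizer and on Zariski-openness only making explicit what the paper leaves implicit. One caveat you inherit from the paper: the matrix $\left\| x_i\,\partial P_0/\partial x_j\right\|$ is the rank-one outer product of the vector $(x_1,\ldots,x_n)$ with the gradient of $P_0$, so its determinant vanishes identically once $n\geq 2$; the genericity hypothesis should instead be phrased as linear independence of the $n^2$ polynomials $x_i\,\partial P_0/\partial x_j$ (equivalently, triviality of the $\mathfrak{gl}(n)$-stabilizer of $P_0$, which holds for generic forms of degree $k\geq 3$), and that is in fact the condition your rank computation actually uses.
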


\begin{remark}
Codimension of regular $%\limfunc
\frak{sp}\left( V\right) -$ orbits \ in $%
S^{2}\left( V^{\ast }\right) $ equals to the dimension of the Cartan
algebra, i.e. $n=\displaystyle\frac{\dim V}{2}$. The $%\limfunc
\frak{sp}\left( V\right) -$
invariants are traces of the even powers of the operators $X_{P}.$
Stabilizers of regular $%\limfunc
\frak{sp}\left( V\right)$ - orbits in $S^{k}\left( V^{\ast }\right) ,k\geq 3,$ are discrete algebraic groups, and,
therefore, they are finite.
\end{remark}

\begin{example}
Let $\dim V=4$ and $k=3.$ Then regular $%\limfunc
\frak{sp}\left( V\right) -$
orbits \ in $S^{3}\left( V^{\ast }\right) $ has codimension $10.$ Let $P\in
S^{3}\left( V^{\ast }\right) ,$ then $P_{2}=[P,P]_{2}\in S^{2}\left( V^{\ast
}\right) $ and $I_{4}(P)=%\limfunc
{\rm Tr}\left( X_{P_{2}}^{2}\right)
,I_{8}\left( P\right) =%\limfunc
{\rm Tr}\left( X_{P_{2}}^{4}\right) ,$ are $%
%\limfunc
\frak{sp}\left( V\right) -$ invariants. Moreover, we have operators $%
A_{P}:S^{q}\left( V^{\ast }\right) \rightarrow S^{q}\left( V^{\ast }\right)
, $ where $A_{P}\left( Q\right) =[P_{2},Q]_{1},$ and traces of their powers $%
%\limfunc
{\rm Tr}\left( A_{P}^{2l}\right) $ give us $%\limfunc
\frak{sp}\left( V\right)
- $ invariants. In particular, for the case $k=3,$ we get 10 $%\limfunc
\frak{sp}%
\left( V\right) -$ invariants$: %\limfunc
{\rm Tr}\left( A_{P}^{2l}\right) ,$ $%
l=1,..,10.$
\end{example}

%======================================================
\subsection{Metric transvectants and metric invariants}

The above approach could be applied word by word to description of $%\limfunc
\frak{so}-$ invariant polynomials on an Eucledean vector space $\left( V,g\right)
, $ where $g\in S^{2}\left( V\right) $ is a metric on the dual space.

Let $\left\{ e_{1},..,e_{n}\right\} \subset V,\dim V=n,$ be an orthonormal
basis for the structure form $g\in S^{2}\left( V\right) ,$ i.e.%
\begin{equation*}
g=%\dsum
\sum\limits_{i=1}^{n}e_{i}\otimes e_{i}.
\end{equation*}%
Then the bi-differential operator $\widehat{g}:\mathcal{S\otimes
S\rightarrow S\otimes S}$ act as follows%
\begin{equation*}
\widehat{g}\left( P\otimes Q\right) =%\dsum
\sum\limits_{i=1}^{n}\left(
e_{i}\left( P\right) \otimes e_{i}\left( Q\right) \right) ,
\end{equation*}%
and the metric transvectants are 
\begin{equation*}
\left( P,Q\right) _{m}=\mu \left( \widehat{g}^{m}\left( P\otimes Q\right)
\right) ,
\end{equation*}%
and the have the following expression in the orthonormal coordinates:%
\begin{equation*}
\left( P,Q\right) _{m}=%\dsum
\sum\limits_{m_{1}+\cdots +m_{n}=m}\binom{m}{%
m_{1}...m_{n}}\frac{\partial ^{m}P}{\partial x_{1}^{m_{1}}\cdots \partial
x_{n}^{m_{m}}}\frac{\partial ^{m}Q}{\partial x_{1}^{m_{1}}\cdots \partial
x_{n}^{m_{m}}}.
\end{equation*}%
Thus,%
\begin{equation*}
\left( P,Q\right) _{1}=%\dsum
\sum\limits_{i=1}^{n}\frac{\partial P}{\partial x_{i}%
}\frac{\partial Q}{\partial x_{i}},
\end{equation*}%
and 
\begin{equation*}
\left( P,Q\right) _{2}=%\dsum
\sum\limits_{i=1}^{n}\frac{\partial ^{2}P}{\partial
x_{i}^{2}}\frac{\partial ^{2}Q}{\partial x_{i}^{2}}+2%\dsum
\sum\limits_{i\neq j}%
\frac{\partial ^{2}P}{\partial x_{i}\partial x_{j}}\frac{\partial ^{2}Q}{%
\partial x_{i}\partial x_{j}}.
\end{equation*}

Remark, that, as above, the metric transvectants generate the linear
operators 
\begin{eqnarray*}
Q &\in &S^{q}V^{\ast }\rightarrow (P,Q)_{m}\in S^{p+q-2m}V^{\ast }, \\
m &\leq &\min \left( p,q\right) ,
\end{eqnarray*}

and we get linear operators 
\begin{eqnarray*}
B_{P,q} &:&S^{q}V^{\ast }\rightarrow S^{q}V^{\ast } \\
B_{P,q} &:&Q\rightarrow (P,Q)_{m}, \\
p &=&2m,
\end{eqnarray*}%
\newline
in the case when $p=\deg \left( P\right) $ is even and $p=2m,$ $2q\geq p.$

For general degree $p,$ we substitute tensor $P\in S^{p}V^{\ast }$ by the
transvectants $P_{l}=(P,P)_{l},$ where $l<p,$ and get operators 
\begin{eqnarray*}
B_{P,l} &:&S^{q}V^{\ast }\rightarrow S^{q}V^{\ast }, \\
B_{P,l} &:&Q\rightarrow (P_{l},Q)_{p-l},
\end{eqnarray*}%
where $q\geq p-l.$

In the case, $l=p$ we get also $%\limfunc
{\rm so}\left( g\right)$ - invariant
binary forms\\ $K_p(P, Q) = (P,Q)_p$ on $S^{p}V^{\ast }$.

\begin{theorem}
Functions $M_{l,k}:S^{p}V^{\ast }\rightarrow \mathbb{R}$, 
\begin{eqnarray*}
M_{l,k}\left( P\right) &=&%\limfunc
{\rm Tr}\left( B_{P,l}^{k}\right) , \\
p-q &\leq &l\leq p,\ 1\leq k\leq \binom{n+q-1}{q},
\end{eqnarray*}%
for general degree $p,$ quadratic polynomials $K_{p},$ and functions $%
N_{k,q}:S^{p}V^{\ast }\rightarrow \mathbb{R}$ 
\begin{eqnarray*}
N_{k,q}\left( P\right) &=&%\limfunc
{\rm Tr}(B_{P,q}^{k}), \\
2q &\geq &p,\ 1\leq k\leq \binom{n+q-1}{q},
\end{eqnarray*}%
for even degree $p,$ are $%\limfunc
{\rm so}\left( V\right) -$ invariant
polynomials on $S^{p}V^{\ast }$ of degree 2k. \newline
\newline
Invariant nondegenerate symmetric forms $K_{p}\left( P,Q\right) =\left(
P,Q\right) _{p}$ on $S^{p}V^{\ast }$ realize representations $%\limfunc
{\rm so}%
\left( V\right) \rightarrow %\limfunc
{\rm so}\left( S^{p}V^{\ast }\right) .$
\end{theorem}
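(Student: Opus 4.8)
The plan is to mimic, essentially word for word, the proof of the symplectic theorem above (the one producing the $I_{l,k}$, $J_{k,q}$ and the forms $[P,Q]_p$), the only structural change being that the metric transvectant $(P,Q)_m$ is \emph{symmetric} in its two arguments, because $\widehat{g}$ is symmetric, rather than alternating.

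First I would establish the $
{\rm so}(V)$-equivariance of the transvectant maps $P\otimes Q\mapsto (P,Q)_m$. This is immediate from their construction: the metric $g$ is, by definition, an $
{\rm so}(V)$-invariant element of $S^{2}V$, so the bi-differential operator $\widehat{g}$ commutes with the induced $
{\rm so}(V)$-action on $\mathcal{S}\otimes\mathcal{S}$; and the multiplication $\mu$ is $
{\rm GL}(V)$-, hence $
{\rm so}(V)$-, equivariant. Consequently, for $h\in
{\rm SO}(V)$ and $\rho_q$ the representation on $S^{q}V^{\ast}$ one gets $B_{h\cdot P,l}=\rho_q(h)\,B_{P,l}\,\rho_q(h)^{-1}$, and likewise for $B_{P,q}$. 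Since the trace of a $k$-th power is conjugation invariant, $M_{l,k}(h\cdot P)=M_{l,k}(P)$ and $N_{k,q}(h\cdot P)=N_{k,q}(P)$, and passing to the Lie algebra gives $
\mathfrak{so}(V)$-invariance. Polynomiality and the stated degrees would then follow by the same bookkeeping as in the symplectic case: $P\mapsto P_l=(P,P)_l$ is homogeneous and $Q\mapsto (P_l,Q)_{p-l}$ bilinear, so the matrix entries of $B_{P,l}$ (respectively $B_{P,q}$) are polynomial in $P$, whence so is every $
{\rm Tr}(B_{P,l}^{k})$, of the degree recorded in the statement, and the quadratic invariant $P\mapsto (P,P)_p$ is manifestly of degree $2$; the bound $k\le\binom{n+q-1}{q}=\dim S^{q}V^{\ast}$ reflects the Cayley--Hamilton relation, past which higher power sums are polynomial in the lower ones.

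Next I would treat the forms $K_p(P,Q)=(P,Q)_p$. Invariance again follows from the equivariance just used, and symmetry from the symmetry of $\widehat{g}$. Nondegeneracy I would check by a direct computation in an orthonormal coordinate system $(x_1,\dots,x_n)$: for multi-indices with $|\alpha|=|\beta|=p$ one has $(x^{\alpha},x^{\beta})_p=0$ unless $\alpha=\beta$, since $\partial^{\gamma}x^{\alpha}\ne 0$ with $|\gamma|=p$ forces $\gamma=\alpha$, while $(x^{\alpha},x^{\alpha})_p=\binom{p}{\alpha}(\alpha!)^{2}=p!\,\alpha!>0$. Thus the Gram matrix of $K_p$ in the monomial basis of $S^{p}V^{\ast}$ is diagonal with positive entries, so $K_p$ is nondegenerate (in fact positive definite). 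Differentiating the invariance identity $K_p(\rho_p(h)P,\rho_p(h)Q)=K_p(P,Q)$ shows that every $\rho_p(\xi)$, $\xi\in
\mathfrak{so}(V)$, is $K_p$-skew, so the image of $\rho_p\colon
\mathfrak{so}(V)\to
\mathfrak{gl}(S^{p}V^{\ast})$ lies in $
{\rm so}(S^{p}V^{\ast},K_p)\cong
{\rm so}(S^{p}V^{\ast})$; this is the asserted homomorphism $
{\rm so}(V)\to
{\rm so}(S^{p}V^{\ast})$.

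The hard part is not really hard: the whole argument is a transcription of the symplectic case, and the only genuinely metric-specific inputs are the parity remark ($\widehat{g}$ symmetric, hence $K_p$ symmetric for every $p$) and the diagonal Gram-matrix computation above. If anything deserves care, it is the degree bookkeeping and confirming that the stated range of $k$ is the correct one, i.e.\ that the power sums $
{\rm Tr}(B_{P,l}^{k})$ are not eventually constant; this follows because for generic $P$ the operator $B_{P,l}$ has pairwise distinct eigenvalues, so its power sums up to $\dim S^{q}V^{\ast}$ are functionally independent, exactly as in the corresponding remark for the symplectic transvectants.
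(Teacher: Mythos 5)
Your proposal is correct and follows exactly the route the paper intends: the theorem is asserted by transcribing the symplectic case ``word by word,'' resting on the $\mathrm{so}(V)$-invariance of $\widehat{g}$ and $\mu$, conjugation-equivariance of $B_{P,l}$ and $B_{P,q}$, and invariance of traces of powers. Your explicit Gram-matrix computation showing $K_p$ is diagonal (indeed positive definite) in the monomial basis, and the differentiation argument giving the homomorphism into $\mathrm{so}\left(S^{p}V^{\ast}\right)$, supply details the paper leaves implicit, so nothing essential is missing.
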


%%%%%%%%%%%%%%%%%%%%%%%%%%%%%%%%%%%%%%%%%%
\section{Constant type differential operators}

%=======================================================
\subsection{Connections and quantizations}

At first, we shortly remind the quantization procedure, that we have used in
(\cite{LY1}).

Let $\nabla $ be a connection in the cotangent bundle $\tau ^{\ast }:T^{\ast
}M\rightarrow M$ and let $\nabla ^{\otimes k}$ be extension of this
connection on the bundles of symmetric tensor products $\tau _{k}^{\ast
}:S^{k}T^{\ast }M\rightarrow M.$

Denote by $d_{\nabla }:\Sigma ^{k}\left( M\right) \rightarrow \Sigma
^{k}\left( M\right) \otimes \Sigma ^{1}\left( M\right) $ the covariant
differential of the last connection.

Taking symmetrization of these covariant differentials, we get derivations $%
d_{\nabla }^{s}$ into the symmetric algebra $\Sigma ^{\cdot }\left( M\right)
=\oplus _{k\geq 0}\Sigma ^{k}\left( M\right) ,$ where 
\begin{equation*}
d_{\nabla }^{s}:\Sigma ^{k}\left( M\right) \overset{d_{\nabla }}{\rightarrow 
}\Sigma ^{k}\left( M\right) \otimes \Sigma ^{1}\left( M\right) \overset{%
\text{Sym}}{\rightarrow }\Sigma ^{k+1}\left( M\right) ,
\end{equation*}%
for $k\geq 1,$ and $d_{\nabla }^{s}=d:C^{\infty }\left( M\right) \rightarrow
\Sigma ^{1}\left( M\right) ,$ for $k=0.$

Then, the $k$-th power of $d_{\nabla }^{s}$ defines a $k$-th order operator $%
\left( d_{\nabla }^{s}\right) ^{k}:C^{\infty }\left( M\right) \rightarrow
\Sigma ^{k}\left( M\right) .$

It is easy to check, that the symbol of this operator at a differential form 
$\theta \in \Sigma ^{1}\left( M\right) $ equals to the $k$-th power $\theta
^{k}\in \Sigma ^{k}\left( M\right) .$

Let now $H\in \Sigma _{k}\left( M\right) $ be a symmetric contravariant
tensor.

Denote by $Q\left( H\right) \in {\rm Diff}_{k}\left( M\right) $ the
following $k$-th order differential operator%
\begin{equation*}
Q\left( H\right) \left( f\right) =\frac{1}{k!}\left\langle H,\left(
d_{\nabla }^{s}\right) ^{k}\left( f\right) \right\rangle ,
\end{equation*}%
where $\left\langle \cdot ,\cdot \right\rangle :\Sigma _{k}\left( M\right)
\otimes \Sigma ^{k}\left( M\right) \rightarrow C^{\infty }\left( M\right) $
is the standard pairing between contra and covariant tensors.

It is easy to check, that the symbol $Q\left( H\right) $ equals $H,$ and,
therefore, the correspondence $Q:H\in \Sigma _{k}\left( M\right) \rightarrow
Q\left( H\right) \in {\rm Diff}_{k}\left( M\right) ,$ that we call 
\textit{quantization}, splits exact sequence (\ref{exact seq}).

On the other hand, let $A\in {\rm Diff}_{k}\left( M\right) $ be a
differential operator and let $\sigma _{A}\in \Sigma _{k}\left( M\right) $
be it symbol.

Let $A_{1}=A-Q\left( \sigma _{A}\right) \in {\rm Diff}_{k-1}\left(
M\right) $ and let $\sigma _{1}\in \Sigma _{k-1}\left( M\right) $ be its
symbol. Then, $A_{2}=A_{1}-Q\left( \sigma _{1}\right) \in {\rm Diff}%
_{k-2}\left( M\right) ,$ and, continue this \ way we get tensors $\sigma
_{i}\in \Sigma _{i}\left( M\right) ,$ $i=0,1,2...,k-1,$ that are calling 
\textit{subsymbols}, and representation of the initial operator $A$ in the
form%
\begin{equation}
A=Q\left( \sigma _{A}\right) +\Sigma _{i=0}^{k-1}Q\left( \sigma _{i}\right) ,
\label{splitting}
\end{equation}%
where 
\begin{multline*}%{equation*}
\sigma \left( A\right) =[\sigma _{A},\sigma _{k-1},...,\sigma _{1},\sigma
_{0}]\in \Sigma _{k}\left( M\right) \oplus \Sigma _{k-1}\left( M\right)
\oplus \cdots\\ \oplus \Sigma _{1}\left( M\right) \oplus \Sigma _{0}\left(
M\right)
\end{multline*}%{equation*}%
is called \textit{total symbol }of the operator.

\subsection{Constant type operators and associated connections}

Let now $\Gamma $ be a transitive Lie pseudogroup on manifold $M$ and let $%
A\in {\rm Diff}_{k}\left( M\right) $ be a differential operator on $M.$

Denote by $O_{a}\subset S^{k}T_{a}M$ \ the $G_{1}\left( a\right) $-orbit of
the symbol $\sigma _{A,a}\in S^{k}T_{a}M,$ where $G_{1}\left( a\right)
\subset %\limfunc
{\rm End}\left( T_{a}\right) $ , is the \textit{linear isotropy
group } (\ref{Tower}).

Denote by $[\psi ]_{a,b},$ $a,b\in M,$ the $1$-jet of a diffeomorphism $\psi
\in \Gamma ,$ such that $\psi \left( a\right) =b.$

Remark also, that $[\psi ]_{a,b}\circ \lbrack \widetilde{\psi }]_{b,a}\in
G_{1}\left( b\right) ,$ for any $\widetilde{\psi }\in \Gamma ,$ such that $%
\widetilde{\psi }\left( b\right) =a.$

We say that an operator $A\in {\rm Diff}_{k}\left( M\right) $ has the
same \textit{type} at points $a,b\in M,$ if for diffeomorphisms $\psi \in
\Gamma ,$ $\psi \left( a\right) =b,$ we have 
\begin{equation}
\lbrack \psi ]_{a,b}\left( \sigma _{A,a}\right) \in O_{b}.  \label{type}
\end{equation}%
We also say that an operator $A\in {\rm Diff}_{k}\left( M\right) $ has a 
\textit{constant }$\Gamma $\textit{-type}, if 
\begin{equation*}
\lbrack \psi ]_{a,b}\left( O_{a}\right) =O_{b},
\end{equation*}%
for any points $a,b\in M$ , and all diffeomorphisms $\psi \in \Gamma ,$ such
that $\psi \left( a\right) =b.$

An affine connection $\nabla $ on a $\Gamma $-manifold $M$ is said to be a $%
\Gamma $\textit{-connec\-tion} if the $\nabla -$parallel transports along
paths, connecting points $a,b\in M,$ are elements of $G_{1}\left( a,b\right)
.$

\begin{theorem}
Let $\Gamma $ be a transitive algebraic Lie pseudogroup on manifold $M$ and
let $A\in {\rm Diff}_{k}\left( M\right) $ be a differential operator on $%
M$ of constant $\Gamma $-type, such that the stabilizers of $\ $the linear
isotropy groups %orbits 
of the symbol $\sigma _{A}$ are finite (or equally, $\dim O_{a}=\dim G_{1}(a)$ for all 
$a\in M$). 

Then there exists
and unique an affine $\Gamma $-connection on $M,$ that preserves the symbol $%
\sigma _{A}\in \Sigma _{k}\left( M\right) $ of the operator.\newline
\end{theorem}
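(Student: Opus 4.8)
The plan is to build the connection locally on trivializing charts, show it is canonically determined, and then verify that the local pieces glue. First I would fix a base point $b\in M$ and work on a chart $U\ni b$ on which the Lie equation $G$ trivializes, so that the pseudogroup $\Gamma$ restricted to $U$ is governed by the linear isotropy group $G_{1}=G_{1}(b)\subset \mathrm{GL}(T_bM)$ and the higher jet groups $G_k(b)$. By the constant-$\Gamma$-type hypothesis, the orbit $O_a=G_1(a)\cdot\sigma_{A,a}$ moves with $a$ exactly as $G_1$-frames do; by the finite-stabilizer hypothesis, $\dim O_a=\dim G_1(a)$, so the map $G_1(a)\to O_a$ is a covering, and $\sigma_{A,a}$ determines its $G_1(a)$-coset of frames up to the finite stabilizer. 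Concretely, choosing a frame $\theta=(\theta_1,\dots,\theta_n)$ of $T^*M$ over $U$ adapted so that $\sigma_A$ has a fixed normal form in $\theta$ pins down a $G_1$-reduction $\mathcal{B}_A\subset \mathcal{B}(M)$ of the frame bundle — the sub-bundle of coframes in which $\sigma_A$ looks standard — and this reduction is canonically associated to $A$ because any two normalizing coframes differ by the finite stabilizer, which carries no continuous moduli and hence leaves the induced connection data untouched.

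Next I would produce the connection on the reduction $\mathcal{B}_A$. Here is where I would invoke the \emph{algebraicity} of $\Gamma$ together with formal integrability of the Lie equation: the second-order part $G_2(b)$ of the Lie equation acts on the fiber of $D_2$, and the affine connections compatible with the $G_1$-structure form a torsor over $\mathrm{Hom}(T_bM,\mathfrak{g}_1)$ intersected with the appropriate Spencer cohomology group. The requirement "preserves $\sigma_A$" together with "is a $\Gamma$-connection" (parallel transport lands in $G_1(a,b)$) forces the connection form to take values in $\mathfrak{g}_1$ and to annihilate $d^s_\nabla\sigma_A$ at first order. Because $\dim O_a=\dim G_1(a)$, the stabilizer subalgebra $\mathfrak{h}_a\subset\mathfrak{g}_1$ of $\sigma_{A,a}$ is zero, so the linear map $\mathfrak{g}_1\to S^kT_aM$, $X\mapsto X\cdot\sigma_{A,a}$, is injective; this is the crucial nondegeneracy that lets me solve uniquely for the $\mathfrak{g}_1$-component of the connection that makes $\sigma_A$ parallel. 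Existence then follows from the first prolongation property $G_k^{(1)}\subset G_{k+1}$ of the Lie equation, which guarantees the formal solvability of the linear system defining $\nabla$ can be realized by an actual smooth connection (one can also simply average any background $\Gamma$-connection against the compatibility constraint, using injectivity of $X\mapsto X\cdot\sigma_A$ to see the correction term is uniquely defined).

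Then I would check uniqueness directly: if $\nabla$ and $\nabla'$ are two affine $\Gamma$-connections preserving $\sigma_A$, their difference is a tensor $T\in\Omega^1(M)\otimes\mathrm{End}(TM)$; the $\Gamma$-connection condition forces $T$ to take values in $\mathfrak{g}_1$, and preservation of $\sigma_A$ by both forces $T(v)\cdot\sigma_{A,a}=0$ for all $v$, whence $T(v)\in\mathfrak{h}_a=0$ by the finite-stabilizer hypothesis, so $T=0$. Uniqueness then automatically yields the global gluing: on overlaps $U_i\cap U_j$ the two locally constructed connections both satisfy the defining properties, hence coincide, giving a well-defined global affine $\Gamma$-connection on $M$. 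The main obstacle I anticipate is the existence half — making the passage from the \emph{pointwise} linear-algebra solvability (injectivity of $X\mapsto X\cdot\sigma_A$) to an honest smooth, and genuinely $\Gamma$-equivariant, connection. This is where the algebraicity of $\Gamma$ and the Cartan–Kuranishi / formal-integrability machinery (the stabilized prolongation $G_{k_0}^{(i)}=G_{k_0+i}$) must be used carefully: one needs that the affine subspace of admissible connection $1$-forms is nonempty \emph{and} cut out by a formally integrable system, so that the pointwise solution propagates. I expect the uniqueness argument, by contrast, to be a short tensorial computation as sketched.
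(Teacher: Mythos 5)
Your uniqueness argument is correct and is essentially the infinitesimal form of what is needed: the difference of two affine $\Gamma$-connections preserving $\sigma _{A}$ is a $1$-form with values in the isotropy algebra of the symbol inside $\mathfrak{g}_{1}$, and this algebra is zero because the stabilizer is finite (here algebraicity is what identifies the Lie algebra of the stabilizer with the infinitesimal stabilizer). The genuine gap is in the existence half, exactly where you yourself flag discomfort, and the tool you reach for there is the wrong one. Injectivity of $X\mapsto X\cdot \sigma _{A,a}$ gives only \emph{uniqueness} of the correction to a background $\Gamma$-connection $\nabla ^{0}$; for \emph{existence} you must show the pointwise solvability of the affine system, i.e. that for every tangent vector $v$ the defect $\nabla _{v}^{0}\sigma _{A}$ lies in the orbit-tangent space $\mathfrak{g}_{1}\cdot \sigma _{A,a}=T_{\sigma _{A,a}}O_{a}$. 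That is precisely the point where the constant-$\Gamma$-type hypothesis must enter (it says that transporting nearby symbols $\sigma _{A,b}$ back to $a$ by elements of $G_{1}(b,a)$ keeps them in the single orbit $O_{a}$, so the symbol field is locally constant modulo the $G_{1}$-action), and you never use it there. Your substitute --- Spencer cohomology, Cartan--Kuranishi, ``formal solvability realized by a smooth connection'' --- is off target: once $\nabla ^{0}$ is fixed, the unknown $\mathfrak{g}_{1}$-valued correction enters the equation $\nabla \sigma _{A}=0$ algebraically and pointwise, so there is nothing to prolong, and no formal-integrability argument can produce solvability if the defect fails to be orbit-tangent; none is needed once it is.

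For comparison, the paper argues at the group level rather than the algebra level: on a small neighborhood $U\ni a$ it takes a smooth section $f(b,a)\in G_{1}(b,a)$ of $\delta _{1,0}$, normalizes it so that $\widetilde{f}(a,a)$ is the identity, uses the type condition to conclude $\widetilde{f}(b,a)(\sigma _{A,b})\in O_{a}$, and then uses finiteness of the stabilizer --- so that $\pi :G_{1}(a)\rightarrow O_{a}$ is a covering --- to lift this map uniquely, near the identity, to $\lambda :U\rightarrow G_{1}(a)$; the corrected isomorphisms $T_{b,a}=\lambda ^{-1}\widetilde{f}$ preserve the symbol and define the Wagner connection. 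Your first paragraph (the sub-bundle of adapted coframes in which $\sigma _{A}$ has a fixed normal form, a reduction to the finite stabilizer, hence locally a parallelism) is the same germ of construction and could be completed along these lines; but nonemptiness and smoothness of that sub-bundle again require the constant-type hypothesis together with the covering/finite-stabilizer argument, not formal integrability of the Lie equation. As written, the existence proof does not go through; supply the orbit-tangency step (or the group-level lifting) and it will.
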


\begin{proof}
Let $U\ni a$ be a neighborhood, that we assume to be reasonably small, $%
G_{1}\left( U,U\right) =\delta _{1,0}^{-1}\left( U\times U\right) \subset
G_{1},$and let $f:U\times U\rightarrow $ $G_{1}\left( U,U\right) $ be a
section of the bundle $\delta _{1,0}$ over $U\times U.$ Then elements $%
f\left( b,a\right) \in G_{1}(b,a)$\ satisfy condition (\ref{type}), and $%
f\left( a,a\right) \in G_{1}\left( a\right) .$ Therefore, elements $%
\widetilde{f}\left( b,a\right) =f(a,a)^{-1}\cdot f\left( b,a\right) $
satisfy condition (\ref{type}) and, in addition, $\widetilde{f}\left(
a,a\right) $ is the unit element of the group $G_{1}\left( a\right) .$%
\newline
Let $St_{a}\subset G_{1}\left( a\right) $ be the stabilizer of the symbol $%
\sigma _{A,a}$ and $\pi :G_{1}\left( a\right) \rightarrow O_{a}=G_{1}\left(
a\right) /St_{a}$ be the natural covering. Take such a neighborhood $%
V_{a}\subset O_{a}$, that $\pi ^{-1}\left( V_{a}\right) \cap St_{a}$
contains only the unit element. Assume now, that neighborhood $U$ is so
small, that $\widetilde{f}\left( b,a\right) \left( \sigma _{A,b}\right) \in
V_{a},$ for all $b\in U.$\newline
Then, by the construction, we have such a unique map $\lambda :U\rightarrow
G_{1}\left( a\right) ,$ where $\lambda \left( a\right) $ is the unit of $%
G_{1}\left( a\right) $, that $\widetilde{f}\left( b,a\right) \left( \sigma
_{A,b}\right) =\lambda \left( a\right)\sigma _{A,a}$.\newline
Thus, the family of isomorphisms $T_{b,a}=\lambda \left( a\right) ^{-1}%
\widetilde{f}\left( a,b\right) \in G_{1}(b,a)$ is uniquely determined,
preserves the symbols: $T_{b,a}\left( \sigma _{A,b}\right) =\sigma _{A,a},$
and, therefore, determines the required affine connection (we call it 
\textit{Wagner }$\Gamma $-\textit{connection,(cf. \cite{Wag},\cite{LY2})}).
\end{proof}

Let $A\in {\rm Diff}_{k}\left( M\right) $ be, as in the above theorem, a
differential operator on $M$ of constant $\Gamma $-type and let $\nabla $ be
the Wagner connection, associated with this operator.

Let $Q:\Sigma _{i}\left( M\right) \rightarrow {\rm Diff}_{i}\left(
M\right) $ be the quantization, associated with the Wagner connection.

Denote by 
\begin{multline*}%{equation*}
\sigma \left( A\right) =[\sigma _{A},\sigma _{k-1},...,\sigma _{1},\sigma
_{0}]\in \Sigma _{k}\left( M\right) \oplus \Sigma _{k-1}\left( M\right)
\oplus \cdots\\ \oplus \Sigma _{1}\left( M\right) \oplus \Sigma _{0}\left(
M\right)
\end{multline*}%{equation*}%
the total symbol of the operator.

Remark, that $\Gamma $-equivalent operators have $\Gamma $-equivalent total
symbols.

\begin{theorem}
Let $A_{1},A_{2}\in {\rm Diff}_{k}\left( M\right) $ be, as in the above
theorem, differential operators on $M$ of constant $\Gamma $-type and let $%
\nabla _{1},\nabla _{2}$ be the Wagner connections, associated with these
operators. Then operators $A_{1},A_{2}$ are $\Gamma $-equivalent if and only
if their total symbols $\sigma _{A_{1}},\sigma _{A_{2}}\in \Sigma _{k}\left(
M\right) $ are $\Gamma $-equivalent.\newline
\end{theorem}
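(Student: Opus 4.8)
The plan is to show the equivalence in both directions, with the forward direction being essentially immediate and the reverse direction being the substantive part. For the forward implication, if $A_1$ and $A_2$ are $\Gamma$-equivalent via some $\psi\in\Gamma$, then $\psi$ carries $A_1$ to $A_2$ as differential operators; since the symbol map ${\rm smbl}$ is natural, $\psi_*(\sigma_{A_1}) = \sigma_{A_2}$, so the symbols are $\Gamma$-equivalent. One must also check that $\psi$ carries the Wagner connection $\nabla_1$ to $\nabla_2$: this follows from the uniqueness clause of the previous theorem, because $\psi^*\nabla_2$ is an affine $\Gamma$-connection preserving $\sigma_{A_1}$, hence equals $\nabla_1$. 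Consequently $\psi$ intertwines the quantizations $Q_1$ and $Q_2$, and therefore carries the whole total symbol $\sigma(A_1)=[\sigma_{A_1},\sigma_{k-1},\ldots,\sigma_0]$ to $\sigma(A_2)$; in particular the leading symbols are $\Gamma$-equivalent, which is all the statement asks for here.

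For the reverse implication, suppose $\sigma_{A_1}$ and $\sigma_{A_2}$ are $\Gamma$-equivalent, say $\psi_*(\sigma_{A_1})=\sigma_{A_2}$ for some $\psi\in\Gamma$. First I would replace $A_2$ by $\psi^*(A_2)$, so that we may assume $\sigma_{A_1}=\sigma_{A_2}=:\sigma$ on the nose; the claim becomes that any two operators in ${\rm Diff}_k(M)$ with the same symbol $\sigma$ (of constant $\Gamma$-type with finite stabilizers) are $\Gamma$-equivalent. The key point is that the Wagner connection depends only on the symbol $\sigma$, by the construction in the proof of the preceding theorem — it is the unique affine $\Gamma$-connection preserving $\sigma$ — so $\nabla_1=\nabla_2=:\nabla$, and hence the associated quantization $Q$ is the same for both operators. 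Using the splitting (\ref{splitting}) relative to this common $Q$, we get $A_j = Q(\sigma) + \sum_{i=0}^{k-1} Q(\sigma_i^{(j)})$ with subsymbols $\sigma_i^{(j)}\in\Sigma_i(M)$. The crucial structural fact, to be invoked from the previous theorem's construction, is that the Wagner connection has zero curvature: the parallel transports $T_{b,a}$ constructed there are path-independent, so $\nabla$ is flat. Flatness means that locally there are $\nabla$-parallel coframes, i.e. coordinates in which $\nabla$ has vanishing Christoffel symbols, and the structure form of $\Gamma$ (e.g. $\omega$ in the symplectic case) is constant; in such coordinates $Q(H)$ is the constant-coefficient operator with symbol $H$, and the subsymbols become the honest lower-order coefficients of $A_j$.

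The remaining step is to produce an element of $\Gamma$ matching the subsymbols. Here I would appeal to the $n$-invariants machinery developed in Sections 3–4: the total symbol $\sigma(A_j)$ is a complete $\Gamma$-invariant of the operator (any $\Gamma$-invariant of $A_j$ is a function of $\sigma(A_j)$ together with its covariant derivatives with respect to $\nabla$, which are themselves determined by $\sigma(A_j)$), so $\Gamma$-equivalence of the total symbols is equivalent to $\Gamma$-equivalence of the operators — and by the forward direction we already know $\Gamma$-equivalence of $A_1,A_2$ forces $\Gamma$-equivalence of total symbols, while conversely the $n$-invariants principle (the second theorem of Section 4) upgrades coincidence of the invariant data to the existence of a genuine pseudogroup element. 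Concretely, one builds $n$ invariants from the components of $\sigma(A_j)$ (using the symplectic transvectants of Section 6 when they are in general position, or the Wagner-connection splitting otherwise), checks they are in general position, forms the adjusted pairs, and invokes the $n$-invariants principle to conclude. \textbf{The main obstacle} is the last step: one must verify that the subsymbols $\sigma_i$, regarded as $\Gamma$-invariant data attached to $\sigma$, actually suffice to pin down the operator up to $\Gamma$ — equivalently, that the map from operators of a fixed symbol type to their total symbols modulo $\Gamma$ is injective. This is exactly where flatness of the Wagner connection and the completeness of the invariants furnished by the splitting (\ref{splitting}) must be combined carefully; the reduction to constant-coefficient operators in a $\nabla$-flat chart is what makes it go through, since there the statement degenerates to the trivial observation that a constant-coefficient operator is determined by its coefficients.
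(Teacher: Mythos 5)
Your forward direction is fine and matches the paper's remark preceding the theorem (naturality of the symbol map plus uniqueness of the Wagner connection). The problem is in how you set up the reverse direction: you assume only that the \emph{principal} symbols $\sigma_{A_1},\sigma_{A_2}\in\Sigma_k(M)$ are $\Gamma$-equivalent and then try to prove the operators are $\Gamma$-equivalent, reducing to the claim that ``any two operators with the same symbol $\sigma$ are $\Gamma$-equivalent.'' That claim is false: the subsymbols are independent invariant data. For instance $Q(\sigma)$ and $Q(\sigma)+1$ (or $Q(\sigma)$ plus an arbitrary potential) have identical principal symbols, but their total symbols differ in the $\Sigma_0(M)$-component, which is itself a $\Gamma$-invariant of the operator, so no element of $\Gamma$ can relate them. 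The hypothesis of the theorem is $\Gamma$-equivalence of the \emph{total} symbols $\sigma(A_i)=[\sigma_{A_i},\sigma_{k-1},\ldots,\sigma_0]$ defined just before the statement (the notation ``$\in\Sigma_k(M)$'' in the statement is a slip of the paper, as the remark and the paper's proof make clear); your ``main obstacle'' is therefore not a technical verification to be supplied but a symptom of having dropped the lower-order part of the hypothesis.

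With the correct hypothesis the argument is exactly the short one you already wrote in your forward direction, run once more in reverse: if $\psi\in\Gamma$ carries $\sigma(A_1)$ to $\sigma(A_2)$, it in particular carries $\sigma_{A_1}$ to $\sigma_{A_2}$, hence by the uniqueness clause of the preceding theorem it carries $\nabla_1$ to $\nabla_2$, hence intertwines the two quantizations, and then the splitting (\ref{splitting}) reconstructs $A_1$ and $A_2$ from their total symbols, so $\psi$ carries $A_1$ to $A_2$. This is the paper's one-line proof. Your detour through flatness of the Wagner connection, $\nabla$-flat coordinates, and the $n$-invariants principle of Sections 3--4 is unnecessary for this theorem and in any case does not close the gap above, since no amount of invariant machinery can recover the subsymbols from the principal symbol alone.
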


\begin{proof}
It is enough to note that $\Gamma $-equivalence of total symbols implies the 
$\Gamma $-equivalence of the principal symbols and, therefore, $\Gamma $%
-equivalence of the Wagner connections and spltting (\ref{splitting}).
\end{proof}

\begin{remark}
The pseudogroup of local symplectomorphisms, $\mathfrak{S}$, satisfies the
requirements of the above theorems, and, therefore, any regular linear
differential operator $A\in {\rm Diff}_{k}\left( M\right) $ of the
constant $\mathfrak{S}$-type, having order $k\geq 3,$ defines a symplectic
connection on the symplectic manifold that preserves the symbol $\sigma
_{A}\in \Sigma _{k}\left( M\right) .$
\end{remark}

\subsection{The second order operators}

In the case of the second order linear differential operators, we have in
hands two practical tools. At first, to get invariants in the case of
regular symbols, one can use the Levi-Civita connection, that is naturally
associated with the operators, and the correspondent quantization.

On the other hand, at least for some pseudogroups, the symbols themselves
have algebraic invariants with respect to the linear isotropy group, as well
as differential invariants (\cite{LY4}).

Altogether, this allows us to get the necessary number of applicable for any
pseudogroup differential invariants and apply the $n$-invariant principle.

\subsection{Weakly nonlinear operators}

In papers (\cite{LY},\cite{LY5}) we have stu\-died some class of nonlinear
operators, that we call \textit{weakly nonlinear}.

In local coordinates $\left( x_{1},..,x_{n}\right) $ these operators have
the following form: $A_{w}\left( f\right) =%\dsum
\sum\limits_{\left\vert \alpha
\right\vert \leq k}a_{\alpha }\left( x,f(x)\right) \partial ^{\alpha },$
where coefficients $a_{\alpha }\left( x,u\right) ,$ as functions on the
space of zero order jets $\mathbf{J}^{0}\left( M\right) $ belong (at any
point $x\in M$ ) to a fixed finite algebraic extension of the field $\mathbf{%
Q}\left( u\right) $ of rational in $u$ functions.

We have shown how to get natural differential invariants for such classes of
operators from invariants of linear differential operators on $\mathbf{J}%
^{0}\left( M\right) $ of the form $A=%\dsum
\sum\limits_{\left\vert \alpha
\right\vert \leq k}a_{\alpha }\left( x,u\right) \partial ^{\alpha }.$

In the case of pseudogroup $\Gamma ,$ different from the pseudogroup of all
local diffeomorphisms of $M$ , we have an additional option to use algebraic
invariants of the symbols $\sigma _{A}$ with respect to the linear isotropy
group and then apply the $n$-invariant principle.

%%%%%%%%%%%%%%%%%%%%%%%%%%%%%%%%%%%%%%%%%%


\begin{thebibliography}{99}

\bibitem{ALV} D. Alekseevskij, V. Lychagin, A. Vinogradov, Basic Ideas and
Concepts of Differential Geometry, in: Encyclopedia of Mathematical
Sciences, Geometry 1, vol. 28, Springer, Berlin, 1991.

\bibitem{Cart} Cartan, E., "Les groupes de transformation continus, infinis,
simples," Ann. Sci. Ecole Norm. Sup., 26, 93-161 (1909).

\bibitem{Gor} Gordan, Paul (1868), Vorlesungen %\U{44c}ber
\''uber Invariantentheorie, Chelsea, New York (1987)

\bibitem{Dec} H. Derksen, Constructive invariant theory, in: Invariant
Theory in All Characteristics, in: CRM Proc. Lecture Notes,vol. 35, Amer.
Math. Soc., 2004, pp. 11--36.

\bibitem{GQS} Guillemin, V.; Quillen, D.; Sternberg, S. The classification
of the complex primitive infinite pseudogroups. Proc. Nat. Acad. Sci. U.S.A.
55 (1966), 687--690.

\bibitem{Hil} Hilbert, David, Theory of algebraic invariants, Cambridge
University Press,1897, (1993).

\bibitem{KL} Kruglikov, Boris, Lychagin, Valentin. Global Lie-Tresse
theorem. Selecta Math. (N.S.) 22 (2016), no. 3, 1357--1411.

\bibitem{Kum} Kumpera, Antonio; Spencer, Donald. Lie Equations, Vol. I.
Princeton University Press.

\bibitem{BL} Bibikov, Pavel; Lychagin, Valentin Invariants of algebraic
group actions from differential point of view. J. Geom. Phys. 136 (2019),
89--96.

\bibitem{BL2} Bibikov, Pavel; Lychagin, Valentin Projective classification
of binary and ternary forms. J. Geom. Phys. 61 (2011), no. 10, 1914--1927

\bibitem{LR} Lychagin, Valentin, and Roop, Michael. \textquotedblleft
Differential Invariants in Algebra." Groups, Invariants, Integrals, and
Mathematical Physics: The Wis\l a 2020-21 School and Workshop. Springer
Nature, 2023, 1-34

\bibitem{LY1} Lychagin, Valentin; Yumaguzhin, Valeriy On structure of linear
differential operators, acting on line bundles. J. Geom. Phys. 148 (2020),
103549, 27 pp.

\bibitem{LY2} Lychagin, Valentin; Yumaguzhin, Valeriy, On equivalence of
third order linear differential operators on two-dimensional manifolds. J.
Geom. Phys. 146 (2019), 103507, 18 pp.

\bibitem{LY} Lychagin Valentin, Yumaguzhin Valeriy, On natural invariants
and equivalence of differential operators // Journal of Geometry and
Physics. 2023.

\bibitem{LY3} Lychagin, Valentin; Yumaguzhin, Valeriy Classification of the
second order linear differential operators and differential equations. J.
Geom. Phys. 130 (2018), 213--228.

\bibitem{LY4} Lychagin, V.; Yumaguzhin, V. Invariants in relativity theory.
Lobachevskii J. Math. 36 (2015), no. 3, 298--312.

\bibitem{LY5} Lychagin, Valentin; Yumaguzhin, Valeriy Natural differential
invariants and equivalence of nonlinear second order differential operators.
J. Geom. Phys. 178 (2022), Paper No. 104549, 8 pp.

\bibitem{Olv} P.J. Olver, Classical Invariant Theory, London Math. Soc.
Stud. Texts, vol. 44, Cambridge Univ. Press, 1999.

\bibitem{OLv2} Olver, Peter J.; Sanders, Jan A. (2000), "Transvectants,
modular forms, and the Heisenberg algebra", Advances in Applied Mathematics,
25 (3): 252--283

\bibitem{Pop} V.L. Popov, Groups, generators, syzygies, and orbits in
invariant theory. Transl. Math. Monogr., vol. 100, Amer. Math. Soc., 1992.

\bibitem{Ros} Rosenlicht, Maxwell. Some basic theorems on algebraic groups.
Amer. J. Math. 78 (1956), 401--443.

\bibitem{Sing} Singer, I. M.,Sternberg, Shlomo . "The infinite groups of Lie
and Cartan, Part I, (The transitive groups)". Journal d'Analyse
Mathematique. 15 (1): 1--114, 1965.

\bibitem{Syl} J.J. Sylvester On subinvariants, that is, semi-invariants to
binary quantics of an unlimited order Amer. J. Math., 5 (1882), pp. 79-136

\bibitem{Wag} V.V. Wagner, Two dimensional space with cubic metric, in: Sci.
notes of Saratov State University, Vol. 1(XIV), in: Ser. FMI, 1, 1938, pp.
29--34 (in Russian)
\end{thebibliography}
\end{document}